\theoremstyle{plain}
\newtheorem{Thm}{Theorem}[section]
\newtheorem{Lem}[Thm]{Lemma}
\newtheorem{Prop}[Thm]{Proposition}
\newtheorem{Cor}[Thm]{Corollary}
\theoremstyle{definition}
\newtheorem{Def}[Thm]{Definition}
\newtheorem{example}[Thm]{Example}
\numberwithin{equation}{section}
\newcommand{\bnum}{\begin{enumerate}}
\newcommand{\enum}{\end{enumerate}}
\begin{document}
\begin{center}
\textbf{Nil Clean Ideal}\\
\end{center}

\begin{center}
Ajay Sharma and Dhiren Kumar Basnet\\
\small{\it Department of Mathematical Sciences, Tezpur University,
 \\ Napaam, Tezpur-784028, Assam, India.\\
Email: ajay123@tezu.ernet.in \\
 dbasnet@tezu.ernet.in}
\end{center}
\noindent \textit{\small{\textbf{Abstract:}  }} Motivated by the concept of clean ideals, we introduce the notion of nil clean ideals of a ring. We define an ideal $I$ of a ring $R$ to be nil clean ideal if every element of $I$ can be written as a sum of an idempotent and a nilpotent element of $R$. In this article we discuss various properties of nil clean ideals.

\bigskip

\noindent \small{\textbf{\textit{Key words:}} Clean ideal, nil clean ideal.} \\
\smallskip

\noindent \small{\textbf{\textit{$2010$ Mathematics Subject Classification:}}  16N40, 16U99.} \\
\smallskip

\bigskip

\section{INTRODUCTION}
$\mbox{\hspace{.5cm}}$
Here rings $R$ are associative rings with unity unless otherwise indicated. The Jacobson radical, set of units, set of nilpotent elements, set of idempotents and centre of a ring $R$ are denoted by $J(R)$, $U(R)$, $Nil(R)$, $Idem(R)$ and $C(R)$ respectively.
Nicholson\cite{nicholson1977lifting} called an element $x$ of a ring $R$, a clean element, if $x=e+u$ for some $e\in Idem(R)$, $u\in U(R)$ and called $R$, a clean ring if all its elements are clean. H. Chen and M. Chen\cite{chen2003clean}, introduced the concept of clean ideals as follows: an ideal $I$ of a ring $R$ is called clean ideal if for any $x\in I$, $x=u+e$, for some $u\in U(R)$ and $e\in Idem(R)$. Motivated by these ideas we define an ideal $I$ of a ring $R$ to be a nil clean ideal if for any $x\in I$, $x=n+e$, where $n\in Nil(R)$ and  $e\in Idem(R)$ and ideal $I$ of a ring $R$ is called uniquely nil clean ideal if for each $a\in I$, there exists a unique $e\in Idem(R)$ such that $a-e\in Nil(R)$. In this article, we discuss some interesting properties of nil clean ideals of a ring.
\section{Nil clean ideal}
\begin{Def}
  An Ideal $I$ of a ring $R$ is said to be a nil clean ideal if every element of $I$ can be written as a sum of an idempotent and a nilpotent element of $R$.
\end{Def}
Clearly every ideal of a nil clean ring is nil clean ideal. But there exist rings which are not nil clean rings but contains some nil clean ideal, for example the ring $\mathbb{Z}_{p^n}$, where $n>1$ and $p$ is an odd prime, is not nil clean ring but every proper ideal of $\mathbb{Z}_{p^n}$ is nil clean ideal. Another such example is given below.

\begin{example}
  Let $R_1, R_2$ be two rings such that $R_1$ is  nil clean but $R_2$ is not. Consider $R=R_1\oplus R_2$, then $R$ is not nil clean, but the ideal $I=R_1\oplus 0$ is a nil clean ideal of $R$.
\end{example}
\begin{Lem}\label{L1}
  Every nil clean ideal of a ring $R$ is a clean ideal of $R$.
\end{Lem}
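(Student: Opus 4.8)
The plan is to prove the statement element-wise, since both ``nil clean'' and ``clean'' are properties asserted of \emph{every} element of $I$ and involve only the idempotents, nilpotents, and units of the ambient ring $R$. Thus it suffices to fix an arbitrary $x\in I$, use the nil clean hypothesis to write $x=e+n$ with $e\in\idem(R)$ and $n\in\nil(R)$, and to produce from this data an idempotent $f$ and a unit $v$ with $x=v+f$. The ideal structure of $I$ plays no real role here beyond guaranteeing that the chosen $x$ admits an idempotent-plus-nilpotent decomposition.

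The main tool I would use is the observation that for any idempotent $e$ the element $1-2e$ is a unit, indeed an involution, since $(1-2e)^2=1-4e+4e^2=1$. Trying the idempotent $f=1-e$, I compute $x-f=(e+n)-(1-e)=2e-1+n=-(1-2e)+n$, and I want to argue that this is a unit. Factoring out the unit $-(1-2e)$, whose inverse is again $1-2e$, gives $x-f=-(1-2e)\bigl(1-(1-2e)n\bigr)$, so the whole expression is a unit as soon as $(1-2e)n$ is nilpotent, because then $1-(1-2e)n$ is invertible. If $e$ and $n$ commute this is immediate, since $\bigl((1-2e)n\bigr)^k=(1-2e)^k n^k=0$ once $n^k=0$; hence in the commuting (``strongly nil clean'') situation the proof is finished with $v=x-f$ and $f=1-e$.

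The hard part is precisely the non-commuting case: for a general idempotent $e$ and nilpotent $n$ the product $(1-2e)n$ need not be nilpotent, since a nilpotent element of $R$ need not lie in $J(R)$, so the fixed choice $f=1-e$ fails and one cannot expect a single uniform formula for the witnessing idempotent. To handle this I would either (i) invoke the known fact that every nil clean element of a ring is a clean element, reducing the lemma to a citation, or (ii) prove that fact directly by adapting the decomposition: pass to the Peirce decomposition of $R$ relative to $e$ and induct on the index of nilpotency of $n$, at each stage conjugating by the unit $1-2e$, which fixes $e$ and carries $n$ to another nilpotent of the same index, so as to absorb the off-diagonal, non-commuting part of $n$ into an adjusted idempotent. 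I expect essentially all of the difficulty of the lemma to reside in this step, the reduction to element-wise cleanness and the commuting case being routine.
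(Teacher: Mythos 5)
There is a genuine gap, and it lies exactly where you located the difficulty. Your reduction dismisses the ideal structure of $I$ as playing ``no real role,'' but it is the whole point of the paper's proof: since $I$ is an ideal, $-x\in I$ whenever $x\in I$, so one applies the nil clean hypothesis to $-x$ rather than to $x$. Writing $-x=e+n$ with $e\in\idem(R)$, $n\in\nil(R)$ gives $x=(1-e)+(-(1+n))$, where $1-e$ is idempotent and $-(1+n)$ is a unit because $1+n$ is (a nilpotent perturbation of $1$). No commutativity of $e$ and $n$ is needed, and the non-commuting case you worried about never arises. Your computation for the commuting case is correct, but it is solving a harder problem than the lemma requires.

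The fallback you propose for the general case does not work. The ``known fact'' that every nil clean \emph{element} is clean is false: Andrica and C\u{a}lug\u{a}reanu exhibited a nil clean $2\times 2$ integer matrix that is not clean (answering a question of Diesl in the negative). Consequently your alternative (ii), the Peirce-decomposition induction, cannot succeed in full generality either, since it would prove that false statement. The lemma survives only because a nil clean ideal is closed under negation, so every element $x\in I$ has the property that $-x$ is nil clean --- and that is precisely the hypothesis under which the standard one-line argument above produces a clean decomposition of $x$. You should replace the element-wise reduction with this observation.
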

\begin{proof}
  Let $I$ be a nil clean ideal of $R$. For $x\in I$, $-x=e+n$, where $e\in Idem(R)$ and $n\in Nil(R)$. Then $x=(1-e)+(-1-n)$, where $1-e \in Idem(R)$ and $-(1+n)\in U(R)$.
\end{proof}
The converse of Lemma \ref{L1} is not true because $I=\{0,2,4\}$ is a clean ideal of $\mathbb{Z}_6$ but not a nil clean ideal of $\mathbb{Z}_6$.
\begin{Prop}\label{PPP1}
If $I$ be a nil clean ideal of a ring $R$ then $I\cap J(R)$ is a nil ideal of $R$.
\end{Prop}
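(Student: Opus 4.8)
The plan is to show that every element of $I\cap J(R)$ is nilpotent; since $I\cap J(R)$ is automatically an ideal of $R$ (being the intersection of the two ideals $I$ and $J(R)$), this will establish that it is a nil ideal. So I would fix an arbitrary $x\in I\cap J(R)$. Because $x\in I$ and $I$ is a nil clean ideal, I can write $x=e+n$ with $e\in Idem(R)$ and $n\in Nil(R)$. The whole problem then reduces to proving that $e=0$, for in that case $x=n$ is nilpotent.

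To isolate $e$, I would pass to the quotient ring $\overline{R}=R/J(R)$ and write bars for images. Since $x\in J(R)$ we have $\overline{x}=0$, so the relation $x=e+n$ collapses to $\overline{e}=-\,\overline{n}$ in $\overline{R}$. Here $\overline{e}$ is idempotent and $\overline{n}$ is nilpotent: if $n^m=0$ then $\overline{n}^{\,m}=0$, whence $\overline{e}^{\,m}=(-\overline{n})^{m}=0$. But $\overline{e}$ idempotent forces $\overline{e}^{\,m}=\overline{e}$, so $\overline{e}=0$, i.e. $e\in J(R)$. It remains to conclude $e=0$ from the fact that $e$ is an idempotent lying in $J(R)$: since $e\in J(R)$ the element $1-e$ is a unit, while $e(1-e)=e-e^{2}=0$, and multiplying by $(1-e)^{-1}$ gives $e=0$. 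Hence $x=n\in Nil(R)$, and as $x$ was arbitrary, $I\cap J(R)$ consists entirely of nilpotent elements.

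The step I expect to be the main obstacle is the non-commutativity: one cannot simply expand a power of $x=e+n$ to exhibit nilpotency directly, because $e$ and $n$ need not commute, so the binomial expansion has no control. Reducing modulo $J(R)$ is exactly what removes this difficulty, since in $\overline{R}$ the clean decomposition becomes the single relation $\overline{e}=-\overline{n}$ tying an idempotent to a nilpotent, from which $\overline{e}=0$ follows formally. The only background facts used are that $Nil$- and $Idem$-membership are preserved under the quotient map and that $J(R)$ contains no nonzero idempotent, both of which are standard.
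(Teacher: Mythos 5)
Your proof is correct and follows essentially the same route as the paper: both reduce to showing that the idempotent $e$ in the decomposition $x=e+n$ lies in $J(R)$ and hence equals $0$, so that $x=n$ is nilpotent. The only difference is presentational: the paper expands $(x-e)^k=n^k=0$ directly in $R$ and observes that every term other than $(-1)^k e^k$ contains a factor of $x$ and so lies in $J(R)$, whereas you reach the same conclusion a bit more cleanly by reducing modulo $J(R)$ first, which avoids having to manage the noncommutative expansion explicitly.
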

\begin{proof}
  Let $x\in I\cap J(R)$, so $x=e+n$, where $e\in Idem(R)$ and $n\in Nil(R)$, so there exists $k\in \mathbb{N}$ such that $n^k=0$. Now $n^k=(x-e)^k=\sum_{t,r\in R}^{finite}txr +(-1)^ke^k=0\Rightarrow e=(-1)^k\sum_{t,r\in R}^{finite}txr\in J(R)$ as $x\in J(R)$. So $1-e\in Idem(R)\cap U(R)=\{1\}$, hence $1-e=1\Rightarrow e=0\Rightarrow x=n$. Thus the result follows.
\end{proof}
Note that Proposition $3.1.6$\cite{Diesl} follows from Proposition \ref{PPP1}.
\begin{Cor}
  If $R$ is a nil clean ring then $J(R)\subseteq N(R)$. In particular $J(R)$ is a nil ideal.
\end{Cor}
\begin{Prop}
  Product of two nil clean ideals of a commutative ring is again a nil clean ideal.
\end{Prop}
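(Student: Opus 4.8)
The plan is to reduce the statement to the elementary observation that the property of being a nil clean ideal is inherited by sub-ideals. Let $I$ and $J$ be nil clean ideals of the commutative ring $R$. First I would note that $IJ \subseteq I$: a typical element of $IJ$ is a finite sum $\sum_k a_k b_k$ with $a_k \in I$ and $b_k \in J \subseteq R$, and since $I$ is an ideal each product $a_k b_k$ lies in $I$, whence the sum does too. Because $I$ is a nil clean ideal, every element of $I$ -- and in particular every element of $IJ$ -- can be written as a sum of an idempotent and a nilpotent of $R$. That is exactly the defining condition for $IJ$ to be a nil clean ideal, so the result follows immediately. Commutativity is used only to guarantee that $IJ$ is a genuine (two-sided) ideal; the inclusion-based argument itself does not otherwise depend on it.

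An alternative, more hands-on route works directly with the generators of $IJ$, and it is instructive to see where it breaks down. For $a \in I$ and $b \in J$, write $a = e + n$ and $b = f + m$ with $e,f \in \idem(R)$ and $n,m \in \nil(R)$. Expanding in the commutative ring gives $ab = ef + (em + nf + nm)$, where $ef$ is idempotent (a product of commuting idempotents), and each of $em$, $nf$, $nm$ is nilpotent: for instance, if $m^k = 0$ then $(em)^k = e^k m^k = e\cdot 0 = 0$. Since a sum of commuting nilpotents is again nilpotent, $em + nf + nm \in \nil(R)$, so every generator $ab$ is idempotent-plus-nilpotent.

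The main obstacle with this second route, and the reason I would prefer the inclusion argument, is that it does not extend to arbitrary elements of $IJ$. A general element is a sum $\sum_k a_k b_k = \sum_k e_k f_k + (\text{nilpotent})$, and while the nilpotent parts add to a nilpotent, a sum of idempotents $\sum_k e_k f_k$ need not be idempotent. There is no obvious way to repair the idempotent part of such a sum, which is precisely why the clean solution routes through $IJ \subseteq I$ and invokes the nil cleanness of $I$ directly, rather than attempting to build the decomposition generator by generator.
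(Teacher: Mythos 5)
Your proposal is correct, but it takes a different route from the paper. The paper's entire proof is the single sentence ``It follows from the fact that sum of nilpotent elements in a commutative ring is again a nilpotent element,'' which is precisely the generator-by-generator strategy you describe as your second route: decompose $a=e+n$, $b=f+m$, observe $ab=ef+(em+nf+nm)$ with $ef$ idempotent and the rest nilpotent. Your critique of that strategy is on target --- it handles single products $ab$ but says nothing about a general element $\sum_k a_k b_k$ of $IJ$, since a sum of idempotents $\sum_k e_k f_k$ need not be idempotent, and the paper's one-liner does not address this. Your preferred argument via $IJ\subseteq I$ sidesteps the issue entirely: the definition of a nil clean ideal only requires each element to be idempotent-plus-nilpotent \emph{in $R$}, so any ideal contained in a nil clean ideal is automatically nil clean. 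This is both more elementary and strictly more general --- it needs only one of the two factors to be nil clean, and commutativity enters only to the extent that one wants $IJ$ to be an ideal (which it is for two-sided ideals in any ring). In short, your first argument gives a complete and cleaner proof of a stronger statement than the paper's sketch actually establishes.
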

\begin{proof}
  It follows from the fact that sum of nilpotent elements in a commutative ring is again a nilpotent element.
\end{proof}
We define strongly nil clean ideal and uniquely strongly nil clean ideal given below:
\begin{Def}
    An Ideal $I$ of a ring $R$ is said to be strongly nil clean ideal if every element of $I$ can be written as a sum of an idempotent and a nilpotent of $R$ that commutes. $I$ is called uniquely strongly nil clean ideal if the strongly nil clean decomposition of every element of $I$ is unique.
 \end{Def}
\begin{Thm}
   An ideal $I$ of a ring $R$ is strongly nil clean ideal \textit{if and only if} it is strongly clean ideal and $a-a^2$ is nilpotent for all $a$ in $I$.
 \end{Thm}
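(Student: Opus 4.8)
The plan is to prove the two implications separately, reusing the idea behind Lemma \ref{L1} for the ``strongly clean'' half of the forward direction and an idempotent-lifting argument for the converse.

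For the forward direction, suppose $I$ is a strongly nil clean ideal and fix $a\in I$. First I would check that $I$ is strongly clean: since $-a\in I$, write $-a=e+n$ with $e\in Idem(R)$, $n\in Nil(R)$ and $en=ne$; then $a=(1-e)+\bigl(-(1+n)\bigr)$, where $1-e\in Idem(R)$ and $1+n\in U(R)$ because $n$ is nilpotent, so $-(1+n)\in U(R)$. Since $e$ commutes with $n$, $1-e$ commutes with $-(1+n)$, giving a strongly clean decomposition of $a$. For the second requirement, starting from a strongly nil clean decomposition $a=e+n$ with $en=ne$, the elements $e$ and $n$ generate a commutative subring, and a direct expansion gives $a-a^2=n(1-2e-n)$, which is nilpotent since $n$ is nilpotent and central in that subring. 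Hence $a-a^2\in Nil(R)$.

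For the reverse direction, assume $I$ is strongly clean and $a-a^2\in Nil(R)$ for every $a\in I$; fix $a\in I$. The key observation is that all of the work can be carried out inside the commutative subring $S=\mathbb{Z}[a]$ generated by $a$ and $1$. Writing $b=a-a^2$, by hypothesis $b$ is nilpotent, so $b$ lies in the nilradical $Nil(S)$, and in $S/Nil(S)$ the image of $a$ satisfies $\bar a=\bar a^2$, i.e.\ $\bar a$ is idempotent. I would then lift this idempotent through the nil ideal $Nil(S)$ to an idempotent $e\in S$ that is a polynomial in $a$ with $a-e\in Nil(S)$. Setting $n=a-e\in Nil(R)$, the element $e$ commutes with $a$ and hence with $n$, so $a=e+n$ is the desired strongly nil clean decomposition.

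The main obstacle is this lifting step, since I must produce an idempotent $e$ that simultaneously differs from $a$ by a nilpotent and commutes with $a$. Both are guaranteed precisely because the lift is taken inside the commutative ring $S=\mathbb{Z}[a]$: choosing $k$ with $b^k=0$, the explicit idempotent $e=\sum_{i\ge k}\binom{2k-1}{i}a^i(1-a)^{2k-1-i}$ is a polynomial in $a$, so commutation is automatic, while $e(1-e)$ carries the factor $a^k(1-a)^k=b^k=0$ and $e\equiv a$ modulo $Nil(S)$, giving $a-e$ nilpotent. It is worth noting that this construction uses only that $a-a^2$ is nilpotent, so the strongly clean hypothesis, although necessary by the forward direction, is not actually invoked to build the decomposition; I would record this observation in a remark after the proof.
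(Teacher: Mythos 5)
Your proof is correct, and the two directions deserve separate comment. The forward direction is essentially the paper's argument: the paper exhibits the strongly clean decomposition $a=(1-e)+(2e-1+n)$ directly from $a=e+n$, whereas you apply the Lemma \ref{L1} trick to $-a$ and get $a=(1-e)+\bigl(-(1+n)\bigr)$; both are valid commuting decompositions, and both compute $a-a^2=n(1-2e-n)$. The converse is where you genuinely diverge. The paper actually uses the strongly clean hypothesis: from $a=e+u$ with $eu=ue$ it computes $a-a^2=u(1-2e-u)$, cancels the unit $u$ (which commutes with the nilpotent product) to conclude that $-1+2e+u$ is nilpotent, and exhibits $a=(1-e)+(-1+2e+u)$ as the strongly nil clean decomposition. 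You instead discard the strongly clean hypothesis and lift the idempotent $\bar a$ of $\mathbb{Z}[a]/Nil(\mathbb{Z}[a])$ through the nilradical via the explicit binomial idempotent $e=\sum_{i\ge k}\binom{2k-1}{i}a^i(1-a)^{2k-1-i}$, which is a polynomial in $a$ and hence automatically commutes with $n=a-e$; the verification that $e$ is idempotent and $a-e$ is nilpotent is correct. Your route is longer but proves strictly more: it shows that the single condition ``$a-a^2$ is nilpotent for all $a\in I$'' already forces $I$ to be strongly nil clean, so the strongly clean hypothesis in the converse is redundant --- this is the ideal-wise analogue of the element-wise characterisation of strongly nil clean elements in \cite{Tamer}. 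The paper's argument is shorter but genuinely needs both hypotheses. Your closing remark recording the redundancy is worth keeping.
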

 \begin{proof}
   Let $I$ be a strongly nil clean ideal of $R$ and $a\in I$. So $a=e+n$, where $e\in Idem(R)$, $n\in Nil(R)$ and $en=ne$, then $a=(1-e)+(2e-1+n)$ is a strongly clean decomposition of $a$. Also $a-a^2=(1-2e-n)n$ is nilpotent. Conversely, let $a\in I$, then $a=e+u$, where $e\in Idem(R)$, $u\in U(R)$ and $eu=ue$. By assumption $a-a^2$ is nilpotent, which implies $(1-2e-u)$ is nilpotent. So $a=(1-e)+(-1+2e+u)$ is a strongly nil clean decomposition of $R$.
 \end{proof}

 \begin{Thm}
   If an ideal $I$ of a ring $R$ is strongly nil clean ideal then
\begin{enumerate}
\item     $I$ is uniquely strongly nil clean ideal of $R$.
\item     $I$ is uniquely strongly clean ideal of $R$.
\end{enumerate}
 \end{Thm}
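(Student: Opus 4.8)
The plan is to establish (i) first---uniqueness of the strongly nil clean decomposition---and then to deduce (ii) from it by turning strongly clean decompositions into strongly nil clean ones. For (i), suppose $a\in I$ admits two strongly nil clean decompositions $a=e_1+n_1=e_2+n_2$ with $e_in_i=n_ie_i$ for $i=1,2$. The difficulty, and the main obstacle of the whole theorem, is that a priori $e_1,n_1$ need not commute with $e_2,n_2$, so I cannot simply throw all four elements into one commutative ring. To circumvent this I would exhibit the idempotent part as a fixed polynomial in $a$, which is canonically attached to $a$ and hence forced to be common to both decompositions. First I would record that $a-a^2=n_i(1-2e_i-n_i)$ is nilpotent, so I may fix $m$ with $(a-a^2)^m=0$. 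Then I would set $e_0=\sum_{t=m}^{2m-1}\binom{2m-1}{t}a^t(1-a)^{2m-1-t}$, the top half of the binomial expansion of $1=(a+(1-a))^{2m-1}$, and check that $e_0$ is idempotent: each term of $e_0$ lies in $a^m\mathbb{Z}[a]$ and each term of $1-e_0$ lies in $(1-a)^m\mathbb{Z}[a]$, so $e_0(1-e_0)\in (a-a^2)^m\mathbb{Z}[a]=0$.

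The crucial step is then to prove $e_0=e_i$ for each $i$. Here I would pass to the commutative subring $S_i=\mathbb{Z}[e_i,n_i]$, which contains $a$ and therefore $e_0\in\mathbb{Z}[a]\subseteq S_i$. Working modulo the nilradical of $S_i$, the image $\bar a=\bar e_i$ is idempotent (as $n_i$ is nilpotent), so every term of $\bar e_0$ with $m\le t\le 2m-2$ carries both a factor $\bar a$ and a factor $1-\bar a$ and hence vanishes, leaving $\bar e_0=\bar a^{\,2m-1}=\bar a=\bar e_i$; thus $e_0-e_i$ is nilpotent. Since $e_0$ and $e_i$ are commuting idempotents in $S_i$, a direct expansion gives $(e_0-e_i)^3=e_0-e_i$, and a nilpotent element satisfying $x^3=x$ must be $0$, so $e_0=e_i$. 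Because $e_0$ is literally the same element for both decompositions, this yields $e_1=e_0=e_2$ and then $n_1=a-e_1=a-e_2=n_2$, which is (i).

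For (ii), let $a=f_1+u_1=f_2+u_2$ be two strongly clean decompositions of some $a\in I$, with $f_i$ idempotent, $u_i$ a unit, and $f_iu_i=u_if_i$. Since $I$ is a strongly nil clean ideal, the preceding theorem guarantees that $a-a^2$ is nilpotent; writing $a-a^2=u_i(1-2f_i-u_i)$ and using that the unit $u_i$ commutes with $1-2f_i-u_i$, I conclude $1-2f_i-u_i$ is nilpotent. Putting $n_i'=u_i-1+2f_i$ then gives $a=(1-f_i)+n_i'$ with $1-f_i$ idempotent, $n_i'$ nilpotent, and the two commuting, i.e.\ a strongly nil clean decomposition of $a$. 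Applying part (i) forces $1-f_1=1-f_2$ and $n_1'=n_2'$, hence $f_1=f_2$ and $u_1=a-f_1=a-f_2=u_2$, establishing (ii). I expect the only real work to be the argument of (i); once the polynomial-in-$a$ idempotent pins down the nilclean part, part (ii) is a short translation.
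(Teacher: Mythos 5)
Your proof is correct. Note, though, that the paper does not actually argue this theorem at all: its entire ``proof'' is a citation of Lemma 2.3 of Ko\c{s}an--Zhou--Zhou, \emph{Nil-clean and strongly nil-clean rings}, which asserts precisely the element-wise statement you prove (a strongly nil clean element has a unique strongly nil clean decomposition and a unique strongly clean decomposition). What you have written is, in effect, a self-contained proof of that lemma, and it follows the standard route: since $a-a^2=n_i(1-2e_i-n_i)$ is nilpotent, the idempotent $e_0=\sum_{t=m}^{2m-1}\binom{2m-1}{t}a^t(1-a)^{2m-1-t}$ built from $(a+(1-a))^{2m-1}$ is a canonical polynomial in $a$; passing to the commutative subring $\mathbb{Z}[e_i,n_i]$ modulo its nilradical shows $e_0-e_i$ is nilpotent, and two commuting idempotents with nilpotent difference are equal (your $(e_0-e_i)^3=e_0-e_i$ computation). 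This pins the idempotent of any strongly nil clean decomposition to $e_0$, giving (i), and your conversion $a=f_i+u_i\mapsto a=(1-f_i)+(u_i-1+2f_i)$, with $1-2f_i-u_i$ nilpotent because the commuting unit $u_i$ can be cancelled from the nilpotent product $u_i(1-2f_i-u_i)$, correctly reduces (ii) to (i). All steps check out; the only cosmetic caveat is that the paper never formally defines ``uniquely strongly clean ideal,'' but your reading (uniqueness of the strongly clean decomposition for every element of $I$) is the intended one. In short: same underlying mathematics as the cited lemma, but your version supplies the details the paper outsources.
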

 \begin{proof}
The proof follows from Lemma 2.3 \cite{Tamer}.
 \end{proof}
In Theorem \ref{TTT1}, we characterise nil clean ideals of a commutative ring which contains the Jacobson radical.
 \begin{Thm}\label{TTT1}
   Let $I$ be an ideal of a commutative ring $R$ and $J(R)\subseteq I$. Then $I/J(R)$ is boolean and $J(R)$ is nil \textit{if and only if} $I$ is nil clean ideal of $R$.
 \end{Thm}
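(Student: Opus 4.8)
The plan is to prove the two implications separately, writing (A) for the statement that $I/J(R)$ is boolean and $J(R)$ is nil, and (B) for the statement that $I$ is a nil clean ideal of $R$.

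For the implication (B)$\Rightarrow$(A), the nilness of $J(R)$ comes for free: since $J(R)\subseteq I$ we have $I\cap J(R)=J(R)$, so Proposition \ref{PPP1} immediately gives that $J(R)$ is nil. For the boolean part, I take an arbitrary $x\in I$ and write $x=e+n$ with $e\in Idem(R)$ and $n\in Nil(R)$. Because $R$ is commutative, the nilradical is contained in the Jacobson radical, so $n\in J(R)$ and hence $x\equiv e\pmod{J(R)}$. As $e^2=e$, the image $\bar x$ is idempotent in $R/J(R)$, i.e. $x^2-x\in J(R)$; since $x\in I$ was arbitrary, $I/J(R)$ is boolean.

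For the converse (A)$\Rightarrow$(B), let $x\in I$. Since $I/J(R)$ is boolean, $\bar x$ is idempotent, i.e. $x^2-x\in J(R)$, and as $J(R)$ is nil the element $n:=x^2-x$ is nilpotent, say $n^m=0$. It then remains to manufacture an idempotent $e\in R$ with $x-e$ nilpotent, which is the one substantive step and amounts to lifting the idempotent $\bar x$ through the nil ideal $J(R)$. Concretely, iterating the map $y\mapsto 3y^2-2y^3$ starting from $x$ works: one checks that for $u=y^2-y$ one has $(3y^2-2y^3)^2-(3y^2-2y^3)=u^2(3-4u)$ and $(3y^2-2y^3)-y=-(2y-1)u$, so each step squares the nilpotency index while changing $y$ only by a multiple of $n$. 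After finitely many steps this produces an idempotent $e$ with $e-x$ in the ideal generated by $n$; since $R$ is commutative that ideal is nil, so $x-e$ is nilpotent, and $x=e+(x-e)$ exhibits $x$ as a nil clean element. Hence $I$ is a nil clean ideal.

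I expect the lifting step in (A)$\Rightarrow$(B) to be the main obstacle, and the point to be careful about is that the lifted idempotent must differ from $x$ by a genuine \emph{nilpotent}, not merely by an element of $J(R)$. This is precisely where commutativity is used twice over: once to place $Nil(R)$ inside $J(R)$ in the first implication, and once to guarantee that the ideal generated by the single nilpotent $x^2-x$ consists entirely of nilpotents in the second.
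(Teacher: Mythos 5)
Your proof is correct and follows essentially the same route as the paper's: Proposition \ref{PPP1} plus $Nil(R)\subseteq J(R)$ (commutativity) for one direction, and lifting the idempotent $\bar x$ through the nil ideal $J(R)$ via a polynomial in $x$ for the other. The only real difference is that you make the lifting explicit with the iteration $y\mapsto 3y^2-2y^3$ where the paper just cites the existence of a suitable $f(t)\in\mathbb{Z}[t]$; your argument is sound (a minor slip: the identity is $(3y^2-2y^3)^2-(3y^2-2y^3)=u^2(4u-3)$, not $u^2(3-4u)$, but only divisibility by $u^2$ matters).
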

 \begin{proof}
 ($\Rightarrow$)  Let $a\in I$, then clearly $a-a^2\in J(R)$. As $J(R)$ is nil so there exists a polynomial $f(t)$ over $\mathbb{Z}$ such that $e:=f(a)$ is an idempotent of $R$ and $a-e\in J(R)$. Hence $a=e+(a-e)$ is a nil clean decomposition. \par
  ($\Leftarrow$) Clearly $J(R)$ is nil ideal by Proposition \ref{PPP1}. Enough to show $I/J(R)$ is boolean. Let $x\in I/J(R) $, so $x=i+J(R)=e+n+J(R)$, where $e\in Idem(R)$ and $n\in Nil(R)$. As J(R) is nil so $x=e+J(R)$ and hence $x^2=x$, $i.e.$ $I/J(R)$ is boolean.
 \end{proof}
\begin{Lem}
  Every idempotent in a uniquely nil clean ideal is a central idempotent.
\end{Lem}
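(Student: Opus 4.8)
The plan is to show that a given idempotent $e\in I$ commutes with every $r\in R$ by producing, for each $r$, an auxiliary idempotent in $I$ whose nil clean decomposition is forced (by uniqueness) to be trivial, thereby yielding a commutation relation.

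First I would fix an idempotent $e\in I$ and an arbitrary $r\in R$, and consider the corner perturbation $a=e+er(1-e)$. Since $I$ is a two-sided ideal and $e\in I$, the element $n:=er(1-e)$ lies in $I$, so $a\in I$. A direct check gives $n^2=er(1-e)er(1-e)=0$ because $(1-e)e=0$, so $n$ is nilpotent; moreover $en=n$ and $ne=0$, whence $a^2=e+n=a$, i.e. $a$ is itself an idempotent of $R$.

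Now I would invoke the uniqueness hypothesis. The element $a\in I$ admits the trivial nil clean decomposition $a=a+0$ with idempotent part $a$, and also the decomposition $a=e+n$ with idempotent part $e$ and nilpotent part $n$. Since $I$ is uniquely nil clean, the idempotent occurring in a nil clean decomposition of $a$ is unique, so $a=e$, that is $er(1-e)=0$, which says $er=ere$. Running the symmetric argument with $b=e+(1-e)re$ — here $(1-e)re\in I$ is again a square-zero nilpotent and $b$ is an idempotent whose idempotent part is forced to be $e$ — yields $(1-e)re=0$, i.e. $re=ere$. Combining the two relations gives $er=re$ for all $r\in R$, so $e$ is central.

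The main obstacle is entirely in the setup: recognizing that the correct elements to feed into the uniqueness hypothesis are the corner perturbations $e+er(1-e)$ and $e+(1-e)re$, and checking that each is simultaneously (i) a member of $I$, (ii) an idempotent of $R$, and (iii) of the form $e$ plus a nilpotent. Once these three facts are verified, uniqueness of the idempotent part does all the remaining work and no lengthy computation is required.
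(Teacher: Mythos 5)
Your proof is correct and is essentially the paper's own argument: both hinge on the square-zero nilpotent $er(1-e)$ and the fact that perturbing $e$ by it yields another idempotent, so uniqueness of the idempotent part forces $er(1-e)=0$ (and symmetrically $(1-e)re=0$). The only cosmetic difference is that you apply uniqueness to the perturbed idempotent $a=e+er(1-e)$, while the paper applies it to $e$ itself via the decomposition $e=(e-ex(1-e))+ex(1-e)$.
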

\begin{proof}
  Let $I$ be a uniquely nil clean ideal of a ring $R$ and $e$ be any idempotent in $I$. For any $x\in R$, since $e=(e-ex(1-e))+ex(1-e)=e+0$, so $(e-ex(1-e))=e\Rightarrow ex=exe$ as $e+ex(1-e)\in Idem(R)$. Similarly we can show that $xe=exe$. Hence $xe=ex$.
\end{proof}

The following theorem shows that, for a nil clean expression of an element of a nil clean ideal of a ring $R$, the nilpotent and idempotent elements are actually elements of the ideal.
  \begin{Thm}\label{main1}
   Let $I$ be an ideal of a ring $R$ then $I$ is nil clean ideal \textit{if and only if} for any $x\in I$, $x=n+e$, where $n\in Nil(I)$ and $e\in Idem(I)$.
 \end{Thm}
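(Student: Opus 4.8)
The plan is to prove the nontrivial forward implication; the converse is immediate, since $Nil(I)\subseteq Nil(R)$ and $Idem(I)\subseteq Idem(R)$, so any decomposition of the refined form $x=n+e$ with $n\in Nil(I)$, $e\in Idem(I)$ is already a nil clean decomposition in the sense of the Definition. Thus I would dispose of $(\Leftarrow)$ in one line and concentrate on $(\Rightarrow)$.

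For the forward direction, suppose $I$ is a nil clean ideal and fix $x\in I$. By definition we may write $x=n+e$ with $n\in Nil(R)$ and $e\in Idem(R)$. The point of the theorem is that membership can be upgraded from $R$ to $I$: I claim this very same $n$ and $e$ in fact lie in $I$, so no new choice of decomposition is needed. The mechanism I would use is to reduce modulo $I$ and exploit that the image of $x$ vanishes.

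Concretely, write $\overline{\,\cdot\,}$ for reduction modulo $I$ in the quotient ring $R/I$. Since $x\in I$ we have $\overline{x}=0$, whence $\overline{e}=-\overline{n}$ in $R/I$. The image of a nilpotent element is nilpotent, so $\overline{n}$, and therefore $\overline{e}=-\overline{n}$, is nilpotent in $R/I$. But $\overline{e}$ is simultaneously idempotent, and the only element of any ring that is both idempotent and nilpotent is $0$ (if $f^2=f$ and $f^m=0$ then $f=f^2=\cdots=f^m=0$). Hence $\overline{e}=0$, that is, $e\in I$, giving $e\in Idem(I)$. Finally $n=x-e$ is a difference of two elements of $I$, so $n\in I$ and thus $n\in Nil(I)$, which yields the desired refined decomposition $x=n+e$.

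The only genuine content is the observation that passing to $R/I$ annihilates $x$ and forces the idempotent part into $I$; I expect no real obstacle beyond stating this quotient argument cleanly. In particular the subtlety that one might have to re-choose $n$ and $e$ does not arise, since the original $R$-level decomposition already lies in $I$ once the above step is in hand.
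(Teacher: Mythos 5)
Your proof is correct, and it is essentially the paper's own argument repackaged in the quotient ring: the paper expands $(x-n)^k$ directly in $R$, noting that every term other than $(-1)^k n^k$ contains a factor of $x\in I$ and hence that $e=e^k=(x-n)^k\in I$, which is exactly your observation that $\overline{e}=-\overline{n}$ is simultaneously idempotent and nilpotent in $R/I$ and therefore zero. Your phrasing via $R/I$ is a slightly cleaner way to say the same thing, and the concluding step $n=x-e\in I$ is identical in both.
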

 \begin{proof}
   Let $I$ be a nil clean ideal of $R$ and $x\in I$. Then there exists $n\in Nil(R)$ and $e\in Idem(R)$ such that $x=e+n$. Now $n^k=0$, for some $k\in \mathbb{N}$. So $(x-n)^k=(-1)^kn^k+\sum_{finite} q_ixp_i$, for some $p_i, q_i\in R$ $\Rightarrow$ $(x-n)^k=\sum_{finite} q_ixp_i\in I$, so $e^k=e\in I$. Hence $n\in Nil(I)$. The other part follows from definition.
 \end{proof}
 \begin{Cor}
   If $R$ is a local ring then every proper nil clean ideal of $R$ is nil ideal. In fact if $R$ has no non trivial idempotents then every proper nil clean ideal of $R$ is nil ideal.
 \end{Cor}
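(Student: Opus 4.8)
The plan is to obtain both assertions as direct consequences of Theorem \ref{main1}, regarding the hypothesis that $R$ has no nontrivial idempotents as the essential one and deriving the local case from it. First I would fix a proper nil clean ideal $I$ of $R$ and an arbitrary $x\in I$. Applying Theorem \ref{main1} rather than the bare definition, I can write $x=n+e$ with $n\in Nil(I)$ and, crucially, $e\in Idem(I)$; the gain over the definition of nil clean ideal is that the idempotent now sits inside $I$ itself, not merely in $R$.

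Next, suppose $R$ has no nontrivial idempotents, so that $Idem(R)=\{0,1\}$. Since $e\in Idem(I)\subseteq Idem(R)$, I get $e\in\{0,1\}$. Because $I$ is a proper ideal it cannot contain the unit $1$ (otherwise $I=R$), so $e\neq 1$ and hence $e=0$. This forces $x=n\in Nil(I)$, and as $x$ was arbitrary, every element of $I$ is nilpotent, i.e. $I$ is a nil ideal. This settles the second assertion.

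Finally, for the local case I would invoke the standard fact that a local ring has only the trivial idempotents: if $e=e^2$, then $e$ and $1-e$ cannot both lie in the unique maximal ideal (their sum is the unit $1$, whereas the non-units form a proper ideal), so at least one of them is a unit, and a unit idempotent is forced to equal $1$; hence $e\in\{0,1\}$. Thus every local ring satisfies the hypothesis of the previous paragraph, and the first assertion follows as a special case.

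I do not anticipate a genuine obstacle here, since the entire argument rests on the single observation, supplied by Theorem \ref{main1}, that the idempotent in a nil clean decomposition of an element of $I$ may be taken inside $I$. The only points requiring any care are the use of properness to exclude $e=1$ and the (routine) verification that local rings have no nontrivial idempotents.
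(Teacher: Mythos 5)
Your proposal is correct and follows exactly the route the paper intends: the corollary is stated immediately after Theorem \ref{main1} precisely so that the idempotent in the decomposition lands inside the proper ideal $I$, where the absence of nontrivial idempotents (automatic in a local ring) forces it to be $0$. The paper gives no written proof, but your argument, including the use of properness to exclude $e=1$, is the evident intended one.
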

  \begin{Thm}\label{mmm}
    Let $I$ be an ideal of a commutative ring $R$. Then $I$ is nil clean ideal of $R$ \textit{if and only if} $I/I\cap J(R)$ is boolean and $I\cap J(R)$ is nil.
 \end{Thm}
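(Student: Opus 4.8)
The plan is to prove Theorem \ref{mmm} as the relative version of Theorem \ref{TTT1}, dropping the containment hypothesis $J(R)\subseteq I$ and replacing $J(R)$ everywhere by $I\cap J(R)$. I would prove the two implications separately, using throughout two standard facts about a commutative ring $R$: first, that $Nil(R)\subseteq J(R)$ (the nilradical lies in every maximal ideal), so a nilpotent element of $I$ automatically lies in $I\cap J(R)$; and second, that any product having a nilpotent factor is nilpotent and any finite sum of nilpotents is nilpotent.

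For the forward implication, suppose $I$ is a nil clean ideal. Proposition \ref{PPP1} immediately yields that $I\cap J(R)$ is nil, so it only remains to show that $I/(I\cap J(R))$ is boolean. Given $a\in I$, write $a=e+n$ with $e\in Idem(R)$ and $n\in Nil(R)$. A direct expansion using commutativity gives $a-a^2=n(1-2e-n)$, which is nilpotent; since $a-a^2\in I$ and nilpotents lie in $J(R)$, we obtain $a-a^2\in I\cap J(R)$, i.e. $\bar a^2=\bar a$ in the quotient. Hence $I/(I\cap J(R))$ is boolean.

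For the converse, suppose $I/(I\cap J(R))$ is boolean and $I\cap J(R)$ is nil. Fix $a\in I$. Booleanness gives $a-a^2\in I\cap J(R)$, which is nil, so $[a(1-a)]^k=0$ for some $k\in\mathbb{N}$. I would then lift $a$ to an idempotent by the explicit Peirce-type construction: expand $1=[a+(1-a)]^{2k-1}$ by the binomial theorem, let $e$ be the sum of those terms $\binom{2k-1}{j}a^j(1-a)^{2k-1-j}$ with $j\ge k$, and let $f$ be the sum of the remaining terms (those with $j<k$). Then $e+f=1$, and $ef=0$ because every cross term carries the factor $a^k(1-a)^k=(a-a^2)^k=0$; hence $e=e(e+f)=e^2$ is idempotent. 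Working modulo the nil ideal generated by $a-a^2$, where $\bar a$ and $\overline{1-a}$ are complementary idempotents, one computes $\bar e=\bar a$, so $a-e$ is nilpotent. The decisive point is that every summand of $e$ contains $a^j$ with $j\ge k\ge 1$, so $e$ is a multiple of $a$ and therefore $e\in I$; consequently $a=e+(a-e)$ is a nil clean decomposition, and $I$ is nil clean.

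The main obstacle, and essentially the only genuine difference from Theorem \ref{TTT1}, is guaranteeing that the lifted idempotent actually lies in $I$. In Theorem \ref{TTT1} this came for free, since $J(R)\subseteq I$ forces $e=a-(a-e)\in I$; here no such containment is available. This is exactly why I would use the explicit polynomial lift with vanishing constant term rather than merely invoking ``idempotents lift modulo a nil ideal'': the construction manifestly exhibits $e$ as an element of the ideal generated by $a$, which secures $e\in Idem(I)$ and $a-e\in Nil(I)$, consistent with Theorem \ref{main1}.
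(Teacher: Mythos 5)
Your proof is correct and its skeleton matches the paper's: the forward direction uses Proposition \ref{PPP1} for the nil part and then checks that $a-a^2$ lands in $I\cap J(R)$ (you via the identity $a-a^2=n(1-2e-n)$ together with $Nil(R)\subseteq J(R)$ in a commutative ring; the paper via $\bar x=\bar e+\bar n=\bar e$, which rests on the same two facts), while the converse lifts $\bar a$ to an idempotent modulo the nil ideal $I\cap J(R)$. The one genuine difference is how that lift is produced. The paper simply declares the converse ``similar to Theorem \ref{TTT1}'', where it invokes the existence of a polynomial $f(t)$ over $\mathbb{Z}$ with $f(a)$ idempotent and $a-f(a)$ nilpotent; you instead write out the classical binomial construction $e=\sum_{j\geq k}\binom{2k-1}{j}a^{j}(1-a)^{2k-1-j}$, which is self-contained and makes visible that $e$ has no constant term, hence $e\in I$ and $a-e\in Nil(I)$. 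That extra precision is welcome but not strictly required for the theorem as stated: the definition of nil clean ideal only asks for $e\in Idem(R)$ and $a-e\in Nil(R)$, and the stronger containments $e\in I$, $a-e\in Nil(I)$ are supplied a posteriori by Theorem \ref{main1}. So your route buys a citation-free argument that simultaneously reproves the refinement of Theorem \ref{main1} in this setting, at the cost of a computation the paper sidesteps by quoting the standard lifting lemma.
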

 \begin{proof}
   $(\Leftarrow)$ Proof of this part is similar to that of theorem \ref{TTT1}. \\
   $(\Rightarrow)$ Clearly by Proposition \ref{PPP1}, $I\cap J(R)$ is nil ideal. Let $x\in I/I\cap J(R)$ then $x=e+n+I\cap J(R)$, where $e\in Idem(I)$ and $n\in Nil(I)$, so $x=e+I\cap J(R)$ and $x^2=x$, as required.
  \end{proof}
 In the following Theorem we characterise a nil clean ring with its nil clean ideals.
 \begin{Thm}\label{main}
     $R$ is a nil clean ring \textit{if and only if} there exists a central idempotent $e$ in $R$ such that ideal generated by $e$ and ideal generated by $1-e$ both are nil clean ideals of $R$.
 \end{Thm}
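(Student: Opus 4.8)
The forward direction requires essentially no work, so the whole content of the statement is the converse. If $R$ is a nil clean ring, I would simply take $e=0$: this is a central idempotent, the ideal it generates is $\{0\}$ (a nil clean ideal, since $0=0+0$), and the ideal generated by $1-e=1$ is all of $R$, which is a nil clean ideal of itself precisely because $R$ is nil clean (every ideal of a nil clean ring is a nil clean ideal). Hence the two-sided ideals $(e)$ and $(1-e)$ are both nil clean.

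For the converse, the plan is to exploit the Peirce decomposition attached to the central idempotent $e$. Since $e$ is central, the ideal it generates is just $Re$, a ring with identity $e$, and likewise the ideal generated by $1-e$ is $R(1-e)$ with identity $1-e$; moreover $e(1-e)=(1-e)e=0$. Given an arbitrary $x\in R$, I would first split it as $x=xe+x(1-e)$, where $xe\in(e)=Re$ and $x(1-e)\in(1-e)=R(1-e)$.

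Next I apply the hypothesis to each summand, but invoking Theorem \ref{main1} rather than the bare definition: because $(e)$ and $(1-e)$ are nil clean ideals, I can write $xe=n_1+f_1$ and $x(1-e)=n_2+f_2$ with $f_1\in Idem(Re)$, $n_1\in Nil(Re)$, $f_2\in Idem(R(1-e))$ and $n_2\in Nil(R(1-e))$. The reason for insisting that the idempotents and nilpotents lie \emph{inside} the ideals is that then $f_1=ef_1=f_1e$ and $f_2=(1-e)f_2=f_2(1-e)$, and similarly $n_1,n_2$ are absorbed by $e$ and $1-e$ respectively. Combining the two pieces gives $x=(n_1+n_2)+(f_1+f_2)$, and I claim this is a nil clean decomposition of $x$.

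The only genuine verification, and the step I expect to be the crux, is that $f_1+f_2$ is idempotent and $n_1+n_2$ is nilpotent; both follow from the orthogonality forced by $e(1-e)=0$. Using $ef_2=e(1-e)f_2=0$ one gets $f_1f_2=(f_1e)f_2=f_1(ef_2)=0$, and symmetrically $f_2f_1=0$, so $(f_1+f_2)^2=f_1^2+f_2^2=f_1+f_2$. The identical annihilation yields $n_1n_2=n_2n_1=0$, whence $(n_1+n_2)^m=n_1^m+n_2^m$ for every $m$, so $n_1+n_2$ vanishes once $m$ exceeds both nilpotency indices. This exhibits $x$ as a sum of an idempotent and a nilpotent of $R$, and since $x$ was arbitrary, $R$ is a nil clean ring.
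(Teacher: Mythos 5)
Your proof is correct and follows essentially the same route as the paper's: decompose $x$ along $R=\langle e\rangle+\langle 1-e\rangle$, invoke Theorem \ref{main1} to place the idempotent and nilpotent pieces inside the respective ideals, and use the resulting orthogonality to combine them. The only differences are cosmetic --- you take $e=0$ where the paper takes $e=1$ in the trivial direction, and you spell out the verification that $f_1+f_2$ is idempotent and $n_1+n_2$ is nilpotent, which the paper leaves as ``easy to see.''
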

 \begin{proof}
    If $R$ is a nil clean ring then $e=1$ works. Conversely, let $e$ be a central idempotent in $R$ such that ideals $<e>$ and $<1-e>$, generated by $e$ and $1-e$ respectively are nil clean ideals of $R$. Since $R=<e>+<1-e>$, so for $x\in R$, $x=a+b$, where $a\in <e>$ and $b\in <1-e>$. There exist $f_1\in Idem(<e>)$, $f_2\in Idem(<1-e>)$, $n_1\in Nil(<e>)$ and $n_2\in Nil(<1-e>)$ such that $a=f_1+n_1$ and $b=f_2+n_2$. Hence $x=(f_1+f_2)+(n_1+n_2)$ and it is easy to see that $f_1+f_2\in Idem(R)$ and $n_1+n_2\in Nil(R)$, as required.
 \end{proof}
A finite orthogonal set of idempotents $e_1, \cdot\cdot\cdot , e_n$ in a ring $R$ is said to be complete set if $e_1+ \cdot\cdot\cdot +e_n=1$. The above Theorem\ref{main} is also characterised by complete orthogonal set of central idempotents.
 \begin{Thm}
 $R$ is nil clean ring \textit{if and only if} there exists a complete set of central idempotents $e_1, \cdot\cdot\cdot , e_n$ in $R$ such that ideal generated by $e_i$ is nil clean ideal of $R$, for all $i$.
 \end{Thm}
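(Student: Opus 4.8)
The plan is to generalize the proof of Theorem \ref{main} from a two-element complete set $\{e,1-e\}$ to an arbitrary complete orthogonal set of central idempotents. The forward direction is immediate: if $R$ is nil clean, then the singleton $\{1\}$ is a complete set of central idempotents, and the ideal $<1>=R$ is trivially a nil clean ideal, so nothing further is required. (Equivalently, this is the case $e=1$ of Theorem \ref{main}.)

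For the converse, suppose $e_1,\dots,e_n$ is a complete orthogonal set of central idempotents with each $<e_i>$ a nil clean ideal. The structural fact I would use is that completeness ($e_1+\cdots+e_n=1$) and centrality give $R=<e_1>+\cdots+<e_n>$, so for any $x\in R$ we may split $x=x\cdot 1=\sum_{i=1}^n xe_i$ with each $xe_i\in<e_i>$. First I would apply Theorem \ref{main1} to each nil clean ideal $<e_i>$ to obtain $xe_i=f_i+n_i$ with $f_i\in Idem(<e_i>)$ and $n_i\in Nil(<e_i>)$; the point of invoking Theorem \ref{main1} rather than the definition is that it places the idempotent and nilpotent components inside $<e_i>$ itself. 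Summing yields $x=\big(\sum_i f_i\big)+\big(\sum_i n_i\big)$.

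The heart of the argument, and the step I expect to be the main obstacle, is verifying that $\sum_i f_i\in Idem(R)$ and $\sum_i n_i\in Nil(R)$. Since $f_i,n_i\in<e_i>=e_iR$ with the $e_i$ central and orthogonal, I would write $f_i=e_if_i$, $n_i=e_in_i$ and observe that for $i\neq j$ every cross term vanishes: $f_if_j=e_ie_jf_if_j=0$ and likewise $n_in_j=0$, using $e_ie_j=0$. Pairwise orthogonality of the $f_i$ then gives $\big(\sum_i f_i\big)^2=\sum_i f_i^2=\sum_i f_i$, so $\sum_i f_i$ is idempotent; and pairwise orthogonality of the $n_i$ gives $\big(\sum_i n_i\big)^m=\sum_i n_i^m$ for every $m$, so choosing $m$ larger than all the individual nilpotency indices forces $\big(\sum_i n_i\big)^m=0$, whence $\sum_i n_i$ is nilpotent. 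Therefore $x=\big(\sum_i f_i\big)+\big(\sum_i n_i\big)$ is a nil clean decomposition, and $R$ is nil clean.

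I would flag that the only delicate point is the orthogonality bookkeeping. It is essential to have pulled the idempotent and nilpotent parts into the individual ideals via Theorem \ref{main1}, since orthogonality of the $e_i$ alone does not force an arbitrary decomposition in $R$ to respect the splitting $R=\bigoplus_i <e_i>$. Once $f_i,n_i\in<e_i>$ is secured, all the cross-term cancellations are routine consequences of $e_ie_j=0$ and centrality.
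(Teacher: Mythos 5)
Your proposal is correct and follows essentially the same route as the paper, which simply reduces the converse to the argument of Theorem \ref{main} via $R=<e_1>+\cdots+<e_n>$; you have filled in the orthogonality bookkeeping (cross terms $f_if_j=n_in_j=0$ from $e_ie_j=0$ and centrality) that the paper leaves implicit in its "it is easy to see" step.
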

 \begin{proof}
   $(\Rightarrow)$ Taking $e=1$.\\
   $(\Leftarrow)$ Clearly $<e_1>+<e_2>+\cdot\cdot\cdot+<e_n>=R$ so similar to Theorem \ref{main} we can show that $R$ is nil clean ring.
 \end{proof}
 \begin{Prop}
   Let $I$ be an ideal of a ring $R$. Then the following are equivalent:
   \begin{enumerate}
     \item $I$ is nil clean ideal of $R$.
     \item There exists a complete set of central idempotents $e_1, \cdot\cdot\cdot , e_n$ in $R$ such that $e_iIe_i$ is a nil clean ideal of $e_iRe_i$, for all $i$.
   \end{enumerate}
 \end{Prop}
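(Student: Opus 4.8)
The plan is to prove the two implications separately, with the forward direction being essentially free and the reverse direction carrying all the content.

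For $(1) \Rightarrow (2)$, I would simply take the trivial complete orthogonal set consisting of the single central idempotent $e_1 = 1$. Then $e_1 R e_1 = R$ and $e_1 I e_1 = I$, so the hypothesis that $I$ is a nil clean ideal of $R$ is exactly the assertion that $e_1 I e_1$ is a nil clean ideal of $e_1 R e_1$.

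For $(2) \Rightarrow (1)$, I would argue as follows. First note that because each $e_i$ is central, $e_i R e_i = e_i R$ is a ring with identity $e_i$, and $e_i I e_i = e_i I$ is an ideal of it; moreover the $e_i$ forming a complete orthogonal set gives, for any $x \in I$, the decomposition $x = \sum_{i=1}^{n} e_i x$ with each $e_i x \in e_i I e_i$. Applying the nil clean hypothesis inside each $e_i R e_i$, I would write $e_i x = f_i + m_i$ with $f_i \in Idem(e_i R e_i)$ and $m_i \in Nil(e_i R e_i)$, and then set $f = \sum_i f_i$ and $m = \sum_i m_i$, so that $x = f + m$.

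The main step -- and the only place requiring care -- is to verify that $f$ is an idempotent of $R$ and $m$ is a nilpotent of $R$. Here the orthogonality and centrality of the $e_i$ do all the work: since $f_i = e_i f_i e_i$ and $m_i = e_i m_i e_i$ lie in the corner $e_i R e_i$, the relations $e_i e_j = 0$ for $i \neq j$ force all cross terms $f_i f_j$ and $m_i m_j$ (with $i \neq j$) to vanish. Consequently $f^2 = \sum_i f_i^2 = \sum_i f_i = f$, and if $m_i^{k_i} = 0$ then $m^N = \sum_i m_i^N = 0$ for $N = \max_i k_i$, since the orthogonal summands never interact. This is precisely the $n$-fold generalisation of the assembly argument used in Theorem \ref{main}, so I anticipate no genuine obstacle beyond bookkeeping the orthogonality. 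The one point I would state explicitly is that an idempotent (respectively nilpotent) of the corner ring $e_i R e_i$ is automatically an idempotent (respectively nilpotent) of $R$, the zero element and the multiplication being inherited from $R$.
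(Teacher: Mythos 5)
Your proposal is correct and follows essentially the same route as the paper: the forward direction via the trivial set $\{1\}$, and the reverse direction by decomposing $x=\sum_i e_i x$ into the corners $e_iIe_i$, lifting the nil clean expressions, and summing the idempotent and nilpotent parts. The only difference is cosmetic --- the paper reduces to the case $n=2$ and asserts the conclusion directly, whereas you treat general $n$ and spell out the orthogonality argument showing the cross terms vanish, which the paper leaves implicit.
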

 \begin{proof}
   $(i)\Rightarrow (ii)$ Taking $e=1$.\\
   $(ii)\Rightarrow (i)$ Let $e_1, \cdot\cdot\cdot , e_n$ be a complete set of central idempotents in $R$ such that $e_iIe_i$ is a nil clean ideal of $e_iRe_i$, for all $i$. It is enough to show the result for $n=2$. Clearly $I=e_1Ie_1\oplus e_2Ie_2$ as $e_1$ is central idempotent. Let $x\in I$, so there exists $f_1\in Idem(e_1Ie_1)$, $m_1\in Nil(e_1Ie_1)$, $f_2\in Idem(e_2Ie_2)$ and $m_2\in Nil(e_2Ie_2)$ such that $x=(f_1+f_2)+(m_1+m_2)$, which is a nil clean expression. Hence $I$ is nil clean ideal of $R$.
 \end{proof}
 \begin{Thm}
   Let $R$ be a ring and $I_1$ be an ideal containing a nil ideal $I$. Then $I_1$ is nil clean ideal of $R$ \textit{if and only if} $I_1/I$ is nil clean ideal of $R/I$.
 \end{Thm}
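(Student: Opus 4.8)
The plan is to prove both implications by pushing elements through the canonical surjection $\pi\colon R\to R/I$, $\pi(r)=r+I$, and the hypothesis that $I$ is nil will be used only in the backward direction. Throughout I will use the two elementary facts that $\pi$ carries idempotents to idempotents and nilpotents to nilpotents.

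For the forward direction ($\Rightarrow$), suppose $I_1$ is a nil clean ideal of $R$. Given any $\bar x\in I_1/I$, I would choose a preimage $x\in I_1$ and write $x=e+n$ with $e\in Idem(R)$ and $n\in Nil(R)$. Applying $\pi$ gives $\bar x=\pi(e)+\pi(n)$, where $\pi(e)\in Idem(R/I)$ and $\pi(n)\in Nil(R/I)$, which is a nil clean decomposition of $\bar x$ in $R/I$. This direction does not need $I$ to be nil.

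For the backward direction ($\Leftarrow$), suppose $I_1/I$ is a nil clean ideal of $R/I$ and fix $x\in I_1$. Then $\bar x=\bar e+\bar n$ for some $\bar e\in Idem(R/I)$ and $\bar n\in Nil(R/I)$. The first key step is to lift $\bar e$: since $I$ is a nil ideal, idempotents lift modulo $I$ (the classical lifting result, cf. \cite{nicholson1977lifting}), so there exists $e\in Idem(R)$ with $\pi(e)=\bar e$. I then set $n:=x-e$, so that $\pi(n)=\bar x-\bar e=\bar n$ is nilpotent in $R/I$; hence $n^m\in I$ for some $m\in\mathbb{N}$. The second use of the hypothesis enters here: because $I$ is nil, $n^m$ is nilpotent, and therefore $n$ itself is nilpotent, i.e. $n\in Nil(R)$. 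Thus $x=e+n$ is a nil clean decomposition in $R$, showing that $I_1$ is a nil clean ideal of $R$.

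The only genuine obstacle is the lifting of the idempotent $\bar e$ to an honest idempotent of $R$; the rest is routine bookkeeping with $\pi$. This is precisely the point at which the assumption that $I$ is nil (rather than an arbitrary ideal) is indispensable — both to carry out the lift and to deduce that the remainder $n$ is nilpotent from the relation $n^m\in I$.
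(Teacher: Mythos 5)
Your proof is correct and follows essentially the same route as the paper: the forward direction is the canonical surjection argument, and the backward direction lifts the idempotent modulo the nil ideal $I$ and then observes that $n=x-e$ satisfies $n^m\in I$ with $I$ nil, hence $n$ is nilpotent. In fact you spell out the final step (why ``nilpotent modulo a nil ideal'' implies nilpotent) more explicitly than the paper does.
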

 \begin{proof}
   If $I_1$ is nil clean of $R$ then clearly $I_1/I$ is nil clean ideal of $R/I$. Conversely, let $I_1/I$ be a nil clean ideal of $R/I$ and $x\in I_1$. Then $\overline{x}=\overline{e}+\overline{n}$, where $\overline{e}\in Idem(I_1/I)$ and $\overline{n}\in Nil(I_1/I)$. Since idempotents can be lifted modulo nil ideal, so lift $\overline{e}$ to $e\in I_1$. Then $x-e$ is nilpotent in $I_1$, modulo $I$ and hence $x-e$ is nilpotent in $I_1$.
 \end{proof}
 \begin{Thm}\label{HOMIMAGE}
   Every homomorphic image of nil clean ideal of a ring is also nil clean ideal.
 \end{Thm}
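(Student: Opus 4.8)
The plan is to reduce the statement to the elementary fact that ring homomorphisms carry idempotents to idempotents and nilpotents to nilpotents. First I would fix the set-up: let $\phi : R \to S$ be a surjective ring homomorphism (so that $S$ is the homomorphic image of $R$) and let $I$ be a nil clean ideal of $R$; the goal is to show that $\phi(I)$ is a nil clean ideal of $S$. The one preliminary point to record is that $\phi(I)$ is genuinely an ideal of $S$: since $\phi$ is onto, given $s \in S$ and $\phi(x) \in \phi(I)$ we may pick $r \in R$ with $\phi(r) = s$, and then $s\phi(x) = \phi(rx)$, $\phi(x)s = \phi(xr)$ with $rx, xr \in I$, so $\phi(I)$ is closed under multiplication by $S$.

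Next I would take an arbitrary element $y \in \phi(I)$ and write $y = \phi(x)$ with $x \in I$. Because $I$ is nil clean, there exist $e \in Idem(R)$ and $n \in Nil(R)$ with $x = e + n$. Applying $\phi$ gives $y = \phi(e) + \phi(n)$, and it remains only to check that this is a nil clean decomposition in $S$. From $\phi(e)^2 = \phi(e^2) = \phi(e)$ we get $\phi(e) \in Idem(S)$, and if $n^k = 0$ then $\phi(n)^k = \phi(n^k) = \phi(0) = 0$, so $\phi(n) \in Nil(S)$. Hence every element of $\phi(I)$ is a sum of an idempotent and a nilpotent of $S$, which is exactly the assertion that $\phi(I)$ is nil clean.

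I do not anticipate a genuine obstacle here: the whole content is that the three structural ingredients (being an ideal, being idempotent, being nilpotent) are each preserved by a surjective unital ring homomorphism. The only points deserving a word of care are the surjectivity of $\phi$ (needed so that the image is an ideal rather than merely a subring) and the preservation of the identity (so that $S$ is unital and ``idempotent'' retains its intended meaning). If a sharper statement is wanted, one can invoke Theorem \ref{main1} to choose $e \in Idem(I)$ and $n \in Nil(I)$ at the outset, whence $\phi(e) \in Idem(\phi(I))$ and $\phi(n) \in Nil(\phi(I))$, exhibiting the idempotent and nilpotent parts inside the image ideal itself.
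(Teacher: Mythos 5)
Your proof is correct, and the paper in fact states Theorem~\ref{HOMIMAGE} without supplying any proof at all, evidently regarding it as immediate. Your argument---that a surjective unital ring homomorphism carries ideals to ideals, idempotents to idempotents, and nilpotents to nilpotents, so the decomposition $x=e+n$ pushes forward to $\phi(x)=\phi(e)+\phi(n)$---is exactly the intended one, and your closing remark that Theorem~\ref{main1} even places $\phi(e)$ and $\phi(n)$ inside $\phi(I)$ is a nice extra.
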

 \begin{Thm}\label{Finite1}
 If $\{R_k\}_{k=1}^n$ be a finite collection of rings and $I_k$ an ideal of $R_k$, then $I=\prod_{k=1}^{n}I_k$ is nil clean ideal of $R=\prod_{k=1}^{n}R_k$ \textit{if and only if} $I_k$ is a nil clean ideal of $R_k$, for all $1\leq k\leq n$.
 \end{Thm}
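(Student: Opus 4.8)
The plan is to exploit the fact that both idempotents and nilpotents in a finite direct product decompose coordinatewise, together with the description of the ideal $I=\prod_{k=1}^{n}I_k$ of $R=\prod_{k=1}^{n}R_k$. I would handle the two implications separately, treating $(\Leftarrow)$ by an explicit coordinatewise construction and $(\Rightarrow)$ by invoking the already established Theorem \ref{HOMIMAGE}.

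For the $(\Leftarrow)$ direction I would start with an arbitrary $x=(x_1,\dots,x_n)\in I$, so that $x_k\in I_k$ for each $k$. Using that each $I_k$ is a nil clean ideal of $R_k$, I would write $x_k=e_k+n_k$ with $e_k\in Idem(R_k)$ and $n_k\in Nil(R_k)$, and then set $e=(e_1,\dots,e_n)$ and $n=(n_1,\dots,n_n)$. It is immediate that $e\in Idem(R)$ and that $x=e+n$; the one point that needs the finiteness hypothesis is that $n\in Nil(R)$: choosing $m_k$ with $n_k^{m_k}=0$ and putting $m=\max_{k} m_k$ gives $n^m=(n_1^m,\dots,n_n^m)=0$. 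Hence every $x\in I$ has a nil clean decomposition and $I$ is a nil clean ideal of $R$.

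For the $(\Rightarrow)$ direction I would use the coordinate projections. For each fixed $j$, the map $\pi_j\colon R\to R_j$ is a surjective ring homomorphism carrying $I=\prod_{k}I_k$ onto $I_j$. Since $I$ is a nil clean ideal of $R$ by hypothesis, Theorem \ref{HOMIMAGE} shows that the homomorphic image $\pi_j(I)=I_j$ is a nil clean ideal of $R_j$, and this holds for every $j$. Alternatively, one can argue directly: given $x_j\in I_j$, embed it as $x=(0,\dots,0,x_j,0,\dots,0)\in I$, take a nil clean decomposition $x=e+n$ in $R$, and read off the $j$-th coordinates to obtain $x_j=e_j+n_j$ with $e_j\in Idem(R_j)$ and $n_j\in Nil(R_j)$.

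I expect the only genuine subtlety to be the passage from coordinatewise nilpotency to nilpotency of the tuple in the $(\Leftarrow)$ direction. Unlike idempotency, which is a purely coordinatewise condition, nilpotency of $n=(n_1,\dots,n_n)$ requires a single exponent annihilating all coordinates simultaneously, and this uniform bound exists precisely because the product is finite. Everything else is a routine transfer of the idempotent–nilpotent decomposition across the product structure and across the projection maps.
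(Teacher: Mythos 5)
Your proof is correct and follows essentially the same route as the paper's: a coordinatewise idempotent--nilpotent decomposition for the $(\Leftarrow)$ direction, and for $(\Rightarrow)$ the embedding of $x_j\in I_j$ as the tuple supported in the $j$-th coordinate (your ``alternative'' direct argument is exactly what the paper does; the appeal to Theorem \ref{HOMIMAGE} is a harmless variant). Your explicit observation that a uniform nilpotency exponent is needed, which is precisely where finiteness enters, is a point the paper leaves implicit.
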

 \begin{proof}
   Let $I_k$ is a nil clean ideal of $R_k$, for all $1\leq k\leq n$ and $x=(x_i)\in I$. There exists $m_i\in Nil(I_i)$ and $e_i\in Idem(I_i)$ such that $x_i=m_i+e_i$. Hence $(x_i)=(m_i)+(e_i)$, where $(m_i)\in Nil(I)$ and $(e_i)\in Idem(I)$. Conversely, for $x\in I_k$, consider $\overline{x}=(0,0,\cdot\cdot\cdot,0,x,0,\cdot\cdot\cdot,0)\in I$, where $k^{th}$ entry is $x$, so $\overline{x}=(e_i)+(m_i)$, where $(e_i)\in Idem(I)$ and $(m_i)\in Nil(I)$. Then $x=e_k+m_k$, where $e_k\in Idem(I_k)$ and $m_k\in Nil(I_k)$. Hence $I_k$ is nil clean ideal of $R_k$.
 \end{proof}
 The above Theorem \ref{Finite1} is not true for arbitrary collection of rings shown by given example.
 \begin{example}
   Consider the ring $R=\mathbb{Z}_2\times \mathbb{Z}_{2^2}\times \mathbb{Z}_{2^3}\times \cdot\cdot\cdot $, clearly for any $n\in \mathbb{N}$, $\mathbb{Z}_{2^n}$ is nil clean ring and hence ideal generated by $2$ in $\mathbb{Z}_{2^n}$ say $<2>_n$ is also nil clean ideal of $\mathbb{Z}_{2^n}$. But the ideal $I=<2>_1\times <2>_2\times <2>_3\times \cdot\cdot\cdot$ is not nil clean ideal of $R$ as $(2,2,2,\cdot\cdot\cdot)\in I$ can not be written as a sum of an idempotent and a nilpotent element of $R$.
 \end{example}
 Next we study the relationship between nil clean ideal of a given ring $R$ with nil clean ideal of upper triangular matrix ring $\mathbb{T}_n(R)$. Here given a matrix $X$, $X_{ij}$ denotes the $(i,j)$th entry of $X$.
 \begin{Lem}\label{D211}
   For $E, N\in \mathbb{T}_n(R)$ the following hold:
   \begin{enumerate}
     \item If $E^2=E$ then $(E_{ii})^2=E_{ii}$ for $1\leq i\leq n$.
     \item $N$ is nilpotent if and only if $N_{ii}$ is nilpotent for $1\leq i\leq n$.
   \end{enumerate}
 \end{Lem}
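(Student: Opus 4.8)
The plan is to route everything through the observation that reading off diagonal entries is multiplicative on $\mathbb{T}_n(R)$. First I would check that for any $A, B \in \mathbb{T}_n(R)$ one has $(AB)_{ii} = A_{ii}B_{ii}$: expanding $(AB)_{ii} = \sum_{k} A_{ik}B_{ki}$, upper-triangularity forces $A_{ik} = 0$ when $k < i$ and $B_{ki} = 0$ when $k > i$, so only the $k=i$ term survives. Since the diagonal map also respects addition and sends the identity matrix to $(1,\dots,1)$, the assignment $\phi(A) = (A_{11}, \dots, A_{nn})$ is a ring homomorphism $\phi : \mathbb{T}_n(R) \to R^n$, where $R^n = R \times \cdots \times R$ is the product ring. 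Parts (i) and (ii) then become statements about how $\phi$ interacts with idempotents and nilpotents.

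For part (i), I would simply apply $\phi$ to the equation $E^2 = E$. Multiplicativity gives $(E_{ii})^2 = (E^2)_{ii} = E_{ii}$ for each $i$, which is exactly the claim.

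For part (ii), the forward direction is immediate from the same principle: if $N^m = 0$ then $(N_{ii})^m = (N^m)_{ii} = 0$, so each $N_{ii}$ is nilpotent. The reverse direction is where the real content lies. I would identify $\ker \phi$ as the ideal $K \subseteq \mathbb{T}_n(R)$ of strictly upper triangular matrices and establish that $K$ is nilpotent, specifically $K^n = 0$: multiplying strictly upper triangular matrices pushes the first possibly-nonzero super-diagonal strictly further from the main diagonal at each step, so after $n$ factors nothing remains. Granting this, suppose each $N_{ii}$ is nilpotent, say $N_{ii}^{m_i} = 0$, and set $m = \max_i m_i$. Then $\phi(N)^m = \big((N_{11})^m, \dots, (N_{nn})^m\big) = 0$ in $R^n$, so $N^m \in \ker\phi = K$. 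Hence $(N^m)^n \in K^n = 0$, giving $N^{mn} = 0$, so $N$ is nilpotent.

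The one genuinely non-formal step is verifying $K^n = 0$; everything else collapses to the multiplicativity of $\phi$. I expect that nilpotency-of-$K$ computation (which is standard for strictly triangular matrices and holds over any associative ring, since it only uses the band-shifting of indices and not commutativity of $R$) to be the main, though routine, obstacle.
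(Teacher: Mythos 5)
Your proof is correct and complete. The paper itself gives no argument here --- it simply cites Lemma 2.1.1 of Diesl's thesis --- and your route (the diagonal-projection ring homomorphism $\phi:\mathbb{T}_n(R)\to R^n$, together with the nilpotency $K^n=0$ of its kernel of strictly upper triangular matrices) is exactly the standard proof that reference supplies, valid over any associative ring since no commutativity is used. The only point worth stating explicitly when writing it up is the index computation behind $K^n=0$: a product of $n$ strictly upper triangular matrices has $(i,j)$ entry given by a sum over strictly increasing chains $i<k_1<\cdots<k_{n-1}<j$, which forces $j\geq i+n$ and hence cannot occur for $1\leq i,j\leq n$.
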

\begin{proof}
  See Lemma $2.1.1$ \cite{Diesl1}.
\end{proof}
\begin{Thm}\label{TT1}
  An ideal $I$ of $R$ is nil clean ideal \textit{if and only if} $\mathbb{T}_n(I)$ is nil clean ideal of $\mathbb{T}_n(R)$.
\end{Thm}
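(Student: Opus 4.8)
The plan is to use Lemma \ref{D211} as the engine of both directions, since it translates idempotency and nilpotency in $\mathbb{T}_n(R)$ into conditions on the diagonal entries alone. I would first note that $\mathbb{T}_n(I)$ is genuinely an ideal of $\mathbb{T}_n(R)$: if $B$ is upper triangular with all entries in $I$ and $A$ is upper triangular over $R$, then every entry of $AB$ and $BA$ is a finite sum of products having a factor from $I$, hence lies in $I$.

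For the forward implication I would assume $I$ is a nil clean ideal of $R$ and take an arbitrary $X \in \mathbb{T}_n(I)$. Each diagonal entry $X_{ii}$ lies in $I$, so I can write $X_{ii} = e_i + n_i$ with $e_i \in Idem(R)$ and $n_i \in Nil(R)$; by Theorem \ref{main1} these can even be chosen in $I$. Setting $E = \mathrm{diag}(e_1,\dots,e_n)$ gives $E^2 = E$, so $E \in Idem(\mathbb{T}_n(R))$, and setting $N = X - E$ produces an upper triangular matrix whose diagonal entries are exactly the $n_i$. The crucial move is that I make no attempt to compute the off-diagonal entries of $N$: by Lemma \ref{D211}(2), $N$ is nilpotent precisely because each $N_{ii} = n_i$ is nilpotent. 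Hence $X = E + N$ is a nil clean decomposition in $\mathbb{T}_n(R)$.

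For the converse I would assume $\mathbb{T}_n(I)$ is a nil clean ideal and let $x \in I$. I would feed the decomposition the test matrix $X = \mathrm{diag}(x,0,\dots,0) \in \mathbb{T}_n(I)$. Writing $X = E + N$ with $E \in Idem(\mathbb{T}_n(R))$, $N \in Nil(\mathbb{T}_n(R))$ and comparing $(1,1)$ entries yields $x = E_{11} + N_{11}$, where Lemma \ref{D211}(1) makes $E_{11} \in Idem(R)$ and Lemma \ref{D211}(2) makes $N_{11} \in Nil(R)$. Thus $x$ is nil clean in $R$, and $I$ is a nil clean ideal of $R$.

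Given Lemma \ref{D211}, neither direction involves serious computation. The only place demanding any care is the forward direction, where one must resist proving $N = X - E$ nilpotent by a direct expansion (its off-diagonal entries can be messy) and instead reduce nilpotency to the diagonal via Lemma \ref{D211}(2); the converse then amounts to choosing a convenient single-entry test matrix and reading off one coordinate. I therefore expect the write-up to be short.
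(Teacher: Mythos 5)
Your proposal is correct and follows essentially the same route as the paper: decompose each diagonal entry of $X\in\mathbb{T}_n(I)$, take $E=\mathrm{diag}(e_1,\dots,e_n)$ and $N=X-E$, and invoke Lemma \ref{D211}(2) to get nilpotency of $N$ from its diagonal alone, then prove the converse with the test matrix $\mathrm{diag}(x,0,\dots,0)$ and Lemma \ref{D211}(1),(2). No substantive differences from the paper's argument.
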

\begin{proof}
  Let $I$ be a nil clean ideal of $R$. Consider $A=(A_{ij})\in \mathbb{T}_n(I)$, then for $1\leq i\leq n$, $A_{ii}=E_{ii}+N_{ii}$, where $E_{ii}\in Idem(I)$ and $N_{ii}\in Nil(I)$, so $A=diag(E_{ii})+(B_{ij})$, where $B_{ii}=N_{ii}$ and $(B_{ij}\in \mathbb{T}_n(I))$. Clearly by Lemma \ref{D211} $diag(E_{ii})\in Idem(\mathbb{T}_n(I))$ and $B_{ij})\in Nil(\mathbb{T}_n(I)$. Hence $\mathbb{T}_n(I)$ is nil clean ideal of $\mathbb{T}_n(R)$. Conversely, for $x\in I$, consider $\overline{x}=diag(x,0,0,\cdot\cdot\cdot,0)\in \mathbb{T}_n(I)$ then $\overline{x}=(E_{ij})+(N_{ij})$, where $E_{ii}\in Idem(I)$ and $N_{ii}\in Nil(I)$. Hence $x=E_{11}+N_{11}$, as required.
  \end{proof}
Let $R$ be a commutative ring and $M$ be a $R$-module. Then the idealization of $R$ and $M$ is the ring $R(M)$ with underlying set $R\times M$ under coordinatewise addition and multiplication given by $(r,m)(r',m')=(rr', rm'+r'm)$, for all $r, r'\in R$ and $m, m' \in M$. It is obvious that if $I$ is an ideal of $R$ then for any submodule $N$ of $M$, $I(N)=\{(r,n)\, : \,r\in I \,\, ,\, \,n\in N \}$ is an ideal of $R(M)$. First we mention basic existing results about idempotents and units in $R(M)$ and study the nil clean ideals of the idealization $R(M)$ of $R$ and $R$-module $M$.
\begin{Lem}\label{RM}
  Let $R$ be a commutative ring and $R(M)$ be the idealization of $R$ and $R$-module $M$. Then the following hold:
  \begin{enumerate}
    \item  $(r,m) \in Idem(R(M))$ if and only if $r \in Idem(R)$ and $m =0$.
    \item  $(r,m) \in Nil(R(M))$ if and only if $r \in Nil(R)$.
  \end{enumerate}
\end{Lem}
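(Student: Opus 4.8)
The plan is to derive both statements from a single explicit computation, namely a closed formula for the powers of an element of $R(M)$. Using the multiplication rule $(r,m)(r',m')=(rr',rm'+r'm)$ I would first record $(r,m)^2=(r^2,2rm)$, and then prove by induction that $(r,m)^k=(r^k,\,k\,r^{k-1}m)$ for all $k\geq 1$. The base case is immediate, and the inductive step is a one-line check: $(r^k,k r^{k-1}m)(r,m)=(r^{k+1},\,r^k m+k r^k m)=(r^{k+1},(k+1)r^k m)$. Commutativity of $R$ is what makes the second coordinate collapse so cleanly, since it lets me treat $rm$ as unambiguous scalar multiplication throughout.

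For part (i), the reverse implication is trivial since $(r,0)^2=(r^2,0)=(r,0)$ whenever $r^2=r$. For the forward implication, setting $(r,m)^2=(r,m)$ yields the pair of equations $r^2=r$ and $2rm=m$. The only step that requires any thought is extracting $m=0$ from these: I would multiply $2rm=m$ by $r$ and invoke $r^2=r$ to obtain $2rm=rm$, whence $rm=0$; feeding this back into $2rm=m$ then forces $m=0$.

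For part (ii), the power formula does all the work. If $r\in Nil(R)$, say $r^j=0$, then choosing $k=j+1$ annihilates both coordinates simultaneously, since $r^{j+1}=0$ and $(j+1)r^{j}m=0$; hence $(r,m)^{j+1}=0$, so $(r,m)\in Nil(R(M))$. Conversely, $(r,m)^k=0$ forces $r^k=0$ simply by reading off the first coordinate, so $r\in Nil(R)$. The whole argument is elementary and I expect no genuine obstacle; the only mildly non-obvious manoeuvre is the multiplication-by-$r$ trick in part (i) used to conclude $m=0$, which is needed because the naive equation $(2r-1)m=0$ does not by itself annihilate $m$ in a general module.
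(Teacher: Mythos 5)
Your proof is correct and follows essentially the same route as the paper, which likewise rests on the power formula $(r,m)^n=(r^n,\,n r^{n-1}m)$ for part (ii) and dismisses part (i) as obvious. You have simply supplied the details the paper omits — the induction establishing the formula and the multiply-by-$r$ step deducing $m=0$ from $r^2=r$ and $2rm=m$ — both of which check out.
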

\begin{proof}
  $(i)$ is obvious and $(ii)$ follows from the fact that $(r,m)^n=(r^n, nr^{n-1}m)$, for any $r\in R$ and $m\in M$.
\end{proof}
\begin{Prop}\label{RM1}
Let $R$ be a commutative ring and $R(M)$, the idealization of $R$ and $R$-module $M$. Then an ideal $I$ of $R$ is nil clean ideal of $R$ \textit{if and only if} $I(N)$ is nil clean ideal of $R(M)$, for any submodule $N$ of $M$.
\end{Prop}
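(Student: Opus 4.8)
The plan is to make Lemma~\ref{RM} do essentially all the work, since it reduces questions about idempotents and nilpotents in $R(M)$ to questions about their first coordinates in $R$. For the forward direction ($\Rightarrow$), I would fix an arbitrary submodule $N$ of $M$ and take a typical element $(r,n)\in I(N)$, so that $r\in I$ and $n\in N$. Applying the hypothesis that $I$ is a nil clean ideal of $R$, I write $r=e+t$ with $e\in Idem(R)$ and $t\in Nil(R)$, and then propose the decomposition $(r,n)=(e,0)+(t,n)$ in $R(M)$. By Lemma~\ref{RM}(i) the summand $(e,0)$ is idempotent in $R(M)$, and by Lemma~\ref{RM}(ii)---which asserts that a pair is nilpotent precisely when its first coordinate is nilpotent---the summand $(t,n)$ is nilpotent irrespective of the value of $n$. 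This exhibits $(r,n)$ as an idempotent plus a nilpotent of $R(M)$, so $I(N)$ is nil clean for every submodule $N$.

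For the converse ($\Leftarrow$), I would exploit that the hypothesis supplies nil cleanness of $I(N)$ for a fixed $N$ and that $0\in N$ for any submodule, so it is enough to analyse elements of the form $(r,0)$ with $r\in I$. Writing $(r,0)=(e,m)+(t,m')$ as an idempotent plus a nilpotent in $R(M)$, Lemma~\ref{RM}(i) forces $m=0$ and $e\in Idem(R)$, while Lemma~\ref{RM}(ii) gives $t\in Nil(R)$. Comparing first coordinates then yields $r=e+t$ with $e\in Idem(R)$ and $t\in Nil(R)$, so $I$ is a nil clean ideal of $R$.

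The only real subtlety---and the step I would flag as the main obstacle---is ensuring that the module component never obstructs the argument. In the forward direction the worry is whether attaching the extra coordinate $n$ to the nilpotent summand is legitimate, and this is resolved precisely by the insensitivity of nilpotency to the second coordinate recorded in Lemma~\ref{RM}(ii). In the backward direction the only care needed is to restrict attention to pairs $(r,0)$ (available in every $I(N)$) and to read off the matching of first coordinates. Everything else is a routine verification, so no separate commutativity or lifting hypotheses are required beyond what Lemma~\ref{RM} already encodes.
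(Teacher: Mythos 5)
Your proof is correct and follows essentially the same route as the paper's: both directions rest entirely on Lemma~\ref{RM}, with the forward direction using the decomposition $(r,n)=(e,0)+(t,n)$ and the converse reading off the first coordinates of a nil clean decomposition of $(r,0)$. No further comment is needed.
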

\begin{proof}
  $(\Rightarrow)$ Consider $(x,m)\in I(N)$. For $x\in I$, $x=n+e$, where $n\in Nil(R)$ and $e\in Idem(R)$, so $(x,m)=(e,0)+(n,m)$, where $(e,0)\in Idem(R(M))$ and $(n,m)\in Nil(R(M))$, by Lemma \ref{RM}.\\

  $(\Leftarrow)$ Let $r\in I$, for $(r,0)\in I(N)$, $(r,0)=(e,0)+(n,n')$, where $(e,0)\in Idem(R(M))$, $(n,n')\in Nil(R(M))$ and $m,n'\in M$. Hence $r=e+n$, where $e\in Idem(R)$ and $n\in Nil(R)$ by Lemma \ref{RM}, as required.
\end{proof}
A Morita context denoted by $(R,S,M,N,\psi,\phi)$ consists of two rings $R$ and $S$, two bimodules $_RN_S$ and $_SM_R$ and a pair of bimodule homomorphisms (called pairings) $\psi:N\otimes _SM\rightarrow R$ and $\phi:M\otimes _RN\rightarrow S$, which satisfies the following associativity: $\psi(n\otimes m)n'=n\phi(m\otimes n')$ and $\phi (m\otimes n)m'=m\psi(n\otimes m')$, for any $m,\,m'\in M$ and $n,\,n'\in N$. These conditions ensure that the set of matrices $\left(
                                                                 \begin{array}{cc}
                                                                   r & n \\
                                                                   m & s \\
                                                                 \end{array}
                                                               \right)$, where $r\in R$, $s\in S$, $m\in M$ and $n\in N$ forms a ring denoted by $T$, called the ring of the context. For any subset $I$ of $T$, define $p_R(I)=\{a\in R\,:\,\left(
                                                                 \begin{array}{cc}
                                                                   a & x\\
                                                                   y & b \\
                                                                 \end{array}
                                                               \right)\in I \}$, $p_M(I)=\{y\in M\,:\,\left(
                                                                 \begin{array}{cc}
                                                                   a & x\\
                                                                   y & b \\
                                                                 \end{array}
                                                               \right)\in I \}$, $p_S(I)=\{b\in S\,:\,\left(
                                                                 \begin{array}{cc}
                                                                   a & x\\
                                                                   y & b \\
                                                                 \end{array}
                                                               \right)\in I \}$ and $p_N(I)=\{x\in N\,:\,\left(
                                                                 \begin{array}{cc}
                                                                   a & x\\
                                                                   y & b \\
                                                                 \end{array}
                                                               \right)\in I \}$
\begin{Lem}
  Let $T= \left(
                                                                 \begin{array}{cc}
                                                                   A & M \\
                                                                   N & B \\
                                                                 \end{array}
                                                               \right)$ be a morita context then $I$ is an ideal of $R$ \textit{if and only if} $I=\left(
                                                                 \begin{array}{cc}
                                                                   A_1 & M_1 \\
                                                                   N_1 & B_1 \\
                                                                 \end{array}
                                                               \right)$,
  where $A_1$ and $B_1$ are ideals of $A$ and $B$ respectively, $M_1\leq _AM_B$ and $N_1\leq _BN_A$ with $M_1N\subseteq A_1$, $N_1M\subseteq B_1$, $A_1M\subseteq M_1$, $B_1N\subseteq N_1$, $MN_1\subseteq A_1$, $NM_1\subseteq B_1$, $MB_1\subseteq M_1$ and $NA_1\subseteq N_1$. In this case $A_1=p_{A}(I)$, $B_1=p_B(I)$, $M_1=p_M(I)$ and $N_1=p_N(I)$.
\end{Lem}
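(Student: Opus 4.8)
The plan is to run everything off the two complementary idempotents $e_1=\left(\begin{smallmatrix} 1_A & 0 \\ 0 & 0\end{smallmatrix}\right)$ and $e_2=\left(\begin{smallmatrix} 0 & 0 \\ 0 & 1_B\end{smallmatrix}\right)$ of the context ring $T$, which satisfy $e_1+e_2=1_T$, $e_1^2=e_1$, $e_2^2=e_2$ and $e_1e_2=e_2e_1=0$. Left- and right-multiplying an arbitrary $x=\left(\begin{smallmatrix} a & m \\ n & b\end{smallmatrix}\right)$ by these idempotents extracts its four corners: $e_1xe_1=\left(\begin{smallmatrix} a & 0 \\ 0 & 0\end{smallmatrix}\right)$, $e_1xe_2=\left(\begin{smallmatrix} 0 & m \\ 0 & 0\end{smallmatrix}\right)$, $e_2xe_1=\left(\begin{smallmatrix} 0 & 0 \\ n & 0\end{smallmatrix}\right)$ and $e_2xe_2=\left(\begin{smallmatrix} 0 & 0 \\ 0 & b\end{smallmatrix}\right)$. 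This corner extraction is the single tool driving both directions.

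For the forward direction, suppose $I$ is a two-sided ideal of $T$. Since $I$ is two-sided, every corner $e_ixe_j$ of any $x\in I$ again lies in $I$; hence, writing $A_1=p_A(I)$, $M_1=p_M(I)$, $N_1=p_N(I)$, $B_1=p_B(I)$, a matrix lies in $I$ exactly when all four of its corners do, so $I=\left(\begin{smallmatrix} A_1 & M_1 \\ N_1 & B_1\end{smallmatrix}\right)$. I would then read the algebraic structure of the corners off products of single-corner matrices: multiplying $\left(\begin{smallmatrix} a & 0 \\ 0 & 0\end{smallmatrix}\right)\left(\begin{smallmatrix} a_1 & 0 \\ 0 & 0\end{smallmatrix}\right)$ on either side shows $A_1$ is an ideal of $A$, the mirror computation gives $B_1\trianglelefteq B$, and sandwiching a corner from $M_1$ between diagonal corners shows $M_1$ is an $(A,B)$-subbimodule, likewise for $N_1$. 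Finally each of the eight containments comes from exactly one off-diagonal product of corners; for instance $\left(\begin{smallmatrix} 0 & m_1 \\ 0 & 0\end{smallmatrix}\right)\left(\begin{smallmatrix} 0 & 0 \\ n & 0\end{smallmatrix}\right)=\left(\begin{smallmatrix} m_1n & 0 \\ 0 & 0\end{smallmatrix}\right)\in I$ forces $M_1N\subseteq A_1$, and the remaining seven arise identically by selecting the corresponding pair of corners.

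For the converse, assume $I=\left(\begin{smallmatrix} A_1 & M_1 \\ N_1 & B_1\end{smallmatrix}\right)$ satisfies all the listed conditions. Additive closure is immediate, so it remains to verify $TI\subseteq I$ and $IT\subseteq I$, which I would do by multiplying $\left(\begin{smallmatrix} a & m \\ n & b\end{smallmatrix}\right)$ by a general element of $I$ on each side and inspecting the four entries of the product. Each entry is a sum of two terms, one from a diagonal action (automatically controlled by $A_1,B_1$ being ideals and $M_1,N_1$ being subbimodules) and one from a pairing, and the eight hypotheses are precisely what places the pairing terms in the correct corner: e.g. the $(1,1)$-entry of $t\cdot\left(\begin{smallmatrix} a_1 & m_1 \\ n_1 & b_1\end{smallmatrix}\right)$ is $aa_1+mn_1$, with $aa_1\in A_1$ since $A_1$ is an ideal and $mn_1\in A_1$ by $MN_1\subseteq A_1$. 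Running through the four entries for each side exhausts the eight conditions, and this bijection between conditions and entry-terms is exactly what makes the characterization tight.

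There is no deep obstacle here: the argument is essentially bookkeeping once corner extraction is in hand. The only thing demanding care is keeping the two pairings straight — the pairing $M\otimes_B N\to A$ feeds the top-left corner while $N\otimes_A M\to B$ feeds the bottom-right — so that each of the eight containments is matched with the unique product of corners (equivalently, the unique entry-term) that actually produces it. Getting this matching right, rather than any single computation, is the crux.
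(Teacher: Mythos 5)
Your argument is correct, but it is worth noting that the paper does not actually prove this lemma at all: its ``proof'' is a one-line citation to Lemma 2.1 of Tang, Li and Zhou, \emph{Study of Morita contexts}. What you have written is a complete, self-contained version of the standard Peirce-decomposition argument that the cited source carries out. Your forward direction (extract the four corners of any $x\in I$ by multiplying with the complementary idempotents $e_1$ and $e_2$, conclude $I$ splits as the four projections, then read off the ideal/subbimodule properties and the eight containments from products of single-corner matrices) and your converse (check the four entries of $tx$ and $xt$ and observe that each pairing term is absorbed by exactly one of the eight hypotheses) together account for every condition in the statement, and your accounting is accurate: $MN_1\subseteq A_1$, $MB_1\subseteq M_1$, $NA_1\subseteq N_1$, $NM_1\subseteq B_1$ handle left multiplication, while $M_1N\subseteq A_1$, $A_1M\subseteq M_1$, $B_1N\subseteq N_1$, $N_1M\subseteq B_1$ handle right multiplication. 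One caution if you compare against the paper itself: the paper's definition of the context ring places $N$ in the top-right corner and $M$ in the bottom-left, whereas the lemma's displayed matrix puts $M$ top-right and $N$ bottom-left; you have (sensibly) followed the lemma's convention, and your identification of which pairing feeds which diagonal corner is consistent with that choice. The only gain of the paper's approach is brevity; yours has the virtue of making the correspondence between the eight containments and the eight entry-terms explicit, which is precisely the content of the result.
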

\begin{proof}
  See Lemma $2.1$ \cite{Tang}.
\end{proof}
\begin{Thm}
  Let $R= \left(
                                                                 \begin{array}{cc}
                                                                   A & M \\
                                                                   N & B \\
                                                                 \end{array}
                                                               \right)$ be a morita context and $I= \left(
                                                                 \begin{array}{cc}
                                                                   A_1 & M_1 \\
                                                                   N_1 & B_1 \\
                                                                 \end{array}
                                                               \right)$ be a strongly nil clean ideal of $R$. Then $A_1$ and $A_2$ are strongly nil clean ideals of $A$ and $B$ respectively.
\end{Thm}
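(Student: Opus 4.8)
The plan is to realize $A$ and $B$ as corner rings of the context ring $R$ and to transport the strongly nil clean structure of $I$ down to these corners (here I read the statement as asserting that $A_1$ and $B_1$ are strongly nil clean ideals of $A$ and $B$). Put $e_1=\begin{pmatrix} 1_A & 0\\ 0 & 0\end{pmatrix}$ and $e_2=\begin{pmatrix} 0 & 0\\ 0 & 1_B\end{pmatrix}$; these are orthogonal idempotents of $R$ with $e_1+e_2=1_R$, and the corner assignments $a\mapsto \begin{pmatrix} a & 0\\ 0 & 0\end{pmatrix}$ and $b\mapsto \begin{pmatrix} 0 & 0\\ 0 & b\end{pmatrix}$ are ring isomorphisms $A\cong e_1Re_1$ and $B\cong e_2Re_2$ (the off-diagonal pairings vanish on purely diagonal corner elements). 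Under these isomorphisms $A_1=p_A(I)$ and $B_1=p_B(I)$ correspond to $e_1Ie_1$ and $e_2Ie_2$. I will carry out the argument for $A_1$; the case of $B_1$ is identical after replacing $e_1$ by $e_2$.

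First I would show that each $a\in A_1$ produces $\xi_a:=\begin{pmatrix} a & 0\\ 0 & 0\end{pmatrix}\in I$. Since $a\in p_A(I)$ there is some $\xi=\begin{pmatrix} a & x\\ y & b\end{pmatrix}\in I$, and a direct computation of the context multiplication gives $e_1\xi e_1=\xi_a$; as $I$ is an ideal and $e_1\in R$, it follows that $\xi_a\in I$. Next, because $I$ is strongly nil clean, every $\zeta\in I$ has $\zeta-\zeta^2$ nilpotent: writing $\zeta=E+N$ with $E\in\idem(R)$, $N\in\nil(R)$ and $EN=NE$, one computes $\zeta-\zeta^2=N(1-2E-N)$, a product of commuting elements one of which is nilpotent, hence nilpotent (this is precisely the characterization of strongly nil clean ideals established earlier). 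Applying this to $\zeta=\xi_a$ and noting $\xi_a-\xi_a^2=\begin{pmatrix} a-a^2 & 0\\ 0 & 0\end{pmatrix}$, I conclude that $a-a^2$ is nilpotent in $A$, since a corner matrix $\begin{pmatrix} c & 0\\ 0 & 0\end{pmatrix}$ is nilpotent in $R$ exactly when $c$ is nilpotent in $A$.

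Finally I would invoke the idempotent-extraction already used in the proof of Theorem~\ref{TTT1}: since $a-a^2$ is nilpotent there is a polynomial $f(t)\in\mathbb{Z}[t]$ with $f(0)=0$ such that $e:=f(a)$ is idempotent and $a-e$ is nilpotent. Because $e$ is a polynomial in $a$ it commutes with $a$, and hence with $a-e$; and because $f(0)=0$ we may write $e=ag(a)$, so $e\in A_1$ (as $A_1$ is an ideal of $A$). Thus $a=e+(a-e)$ is a genuine strongly nil clean decomposition of $a$ inside $A$, which shows $A_1$ is a strongly nil clean ideal of $A$, and the same argument with $e_2$ yields the statement for $B_1$.

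The main obstacle is the descent step: a strongly nil clean decomposition of $\xi_a$ computed in $R$ need not restrict to the corner $e_1Re_1$, so one cannot simply read off an idempotent and nilpotent part living in $A$. The device that resolves this is to extract the \emph{polynomial} idempotent $e=f(a)$, which by construction lies in the commutative subring $\mathbb{Z}[a]\subseteq A$ (indeed in $A_1$) and commutes with $a$; this is exactly what guarantees that the resulting decomposition is a strongly nil clean decomposition internal to $A$ rather than only to $R$.
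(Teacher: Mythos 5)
Your proof is correct, but it takes a genuinely different --- and substantially more complete --- route than the paper, which disposes of this theorem in a single line: ``The proof follows from Theorem \ref{main1}.'' The intended argument there is evidently to take a strongly nil clean decomposition of $\left(\begin{smallmatrix} a & 0\\ 0 & 0\end{smallmatrix}\right)$ with idempotent and nilpotent parts lying in $I$ (hence with entries in $A_1,M_1,N_1,B_1$) and read off the $(1,1)$-entries. For a general Morita context that step is problematic: the projection $p_A$ is not a ring homomorphism, since the $(1,1)$-entry of a product in $R$ picks up a pairing term $\psi(\,\cdot\otimes\cdot\,)$ in addition to the product of the $(1,1)$-entries, so the $(1,1)$-entry of an idempotent of $I$ need not be an idempotent of $A$, nor the $(1,1)$-entry of a nilpotent be nilpotent; the paper only neutralizes this in the following theorem by assuming zero pairing. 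Your route --- embedding $A$ as the corner $e_1Re_1$, using the paper's characterization that strongly nil clean forces $\zeta-\zeta^2$ nilpotent to conclude $a-a^2\in Nil(A)$, and then recovering a commuting idempotent $e=f(a)\in\mathbb{Z}[a]$ --- avoids the projection issue entirely and works for arbitrary pairings. What you rely on instead is the element-level fact that $a-a^2$ nilpotent implies $a$ is strongly nil clean (essentially Lemma 2.3 of \cite{Tamer}, and the same polynomial idempotent-lifting device the paper itself invokes in Theorem \ref{TTT1}); what you gain is an argument that actually closes the gap left by the paper's one-line citation, together with the extra information that the idempotent $e$ can be taken inside $A_1$.
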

\begin{proof}
  The proof follows from Theorem \ref{main1}.
\end{proof}
A morita context $R= \left(
                                                                 \begin{array}{cc}
                                                                   A & M \\
                                                                   N & B \\
                                                                 \end{array}
                                                               \right)$ is called morita context of zero pairing if context products $MN=0$ and $NM=0$.

\begin{Thm}
  Let $R= \left(
                                                                 \begin{array}{cc}
                                                                   A & M \\
                                                                   N & B \\
                                                                 \end{array}
                                                               \right)$ be a morita context of zero pairing. Then $I= \left(
                                                                 \begin{array}{cc}
                                                                   A_1 & M_1 \\
                                                                   N_1 & B_1 \\
                                                                 \end{array}
                                                               \right)$ be a strongly nil clean ideal of $R$ \textit{if and only if} $A_1$ and $A_2$ are strongly nil clean ideals of $A$ and $B$ respectively.
\end{Thm}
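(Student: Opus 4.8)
The plan is to treat the two implications separately, since the forward one is essentially free and all the real content is in the converse. If $I$ is a strongly nil clean ideal of $R$, then the preceding theorem (which needs no hypothesis on the pairings) already gives that the corner projections $A_1=p_A(I)$ and $B_1=p_B(I)$ are strongly nil clean ideals of $A$ and $B$. So I would spend the proof on the converse, where the hypothesis $MN=0=NM$ must be used. Two standing remarks set the stage: $R$ has identity $\mathrm{diag}(1_A,1_B)$, so $A,B$ are unital and $M,N$ are unital bimodules; and the off-diagonal part $K=\begin{pmatrix}0 & M\\ N & 0\end{pmatrix}$ satisfies $K^2=0$ by zero pairing, which is what makes matrix products ``split'' (in any product of two elements of $R$ the terms of the form $M\cdot N$ feeding the $A$-corner and $N\cdot M$ feeding the $B$-corner vanish).

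For the converse, fix $x=\begin{pmatrix} a & m\\ n & b\end{pmatrix}\in I$ and use the strong nil clean decompositions $a=e+s$, $b=f+t$ with $e,f$ idempotent, $s,t$ nilpotent, $es=se$, $ft=tf$; by Theorem \ref{main1} one in fact has $e,s\in A_1$ and $f,t\in B_1$. The two structural consequences of zero pairing I would record are: an element of $R$ with diagonal $(s,t)$ is nilpotent iff $s,t$ are nilpotent; and $E=\begin{pmatrix} e & x_0\\ y_0 & f\end{pmatrix}$ is idempotent iff $ex_0+x_0f=x_0$ and $y_0e+fy_0=y_0$. I therefore search for $E$ of this shape with $x_0\in M_1$, $y_0\in N_1$, so that $E\in Idem(I)$ and $W:=x-E=\begin{pmatrix} s & m-x_0\\ n-y_0 & t\end{pmatrix}\in Nil(I)$ automatically. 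Because $W=x-E$ and $E^2=E$, the commuting requirement $EW=WE$ is equivalent to $Ex=xE$, and a direct computation (again killing the $MN,NM$ cross terms) reduces this, after the diagonal identities $es=se$ and $ft=tf$ are used, to the two off-diagonal equations $ax_0-x_0b=em-mf$ and $by_0-y_0a=fn-ne$.

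The heart of the argument is solving these equations \emph{inside} $M_1$ and $N_1$. For the $M$-equation I would set $e'=1_A-e$, $f'=1_B-f$ and use the Peirce decomposition $M=eMf\oplus eMf'\oplus e'Mf\oplus e'Mf'$. The idempotency condition $ex_0+x_0f=x_0$ says precisely that $x_0\in eMf'\oplus e'Mf$, and the right-hand side $em-mf=emf'-e'mf$ already lies there. Since $es=se$ and $ft=tf$ force the block decompositions $s=ese+e'se'$ and $t=ftf+f'tf'$, the operator $x_0\mapsto ax_0-x_0b$ preserves $eMf'\oplus e'Mf$ and, on each summand, has the form $\mathrm{id}+(\text{nilpotent})$: on $eMf'$ it is $\mathrm{id}+L-R$ with $L(u)=(ese)u$ and $R(u)=u(f'tf')$, which are commuting operators that are nilpotent because $ese$ and $f'tf'$ are nilpotent. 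Hence it is invertible, with inverse a finite polynomial in $L$ and $R$, giving a unique $x_0$; and since that inverse is a $\mathbb{Z}$-combination of terms $(ese)^i(emf')(f'tf')^j$ (and the analogue on $e'Mf$), the bimodule containments $A_1M,\,MB_1,\,AM_1,\,M_1B\subseteq M_1$ from the ideal-structure lemma guarantee $x_0\in M_1$. The $N$-equation is handled symmetrically to produce $y_0\in N_1$. Assembling $E$ and $W=x-E$ then exhibits the strongly nil clean decomposition of $x$.

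I expect the invertibility step to be the main obstacle. The whole argument rests on the observation that, once idempotency pins $x_0$ to the off-diagonal Peirce components, the commuting equation collapses to $\mathrm{id}+(\text{nilpotent})$ precisely because $e,f$ commute with $s,t$ and the $MN,NM$ cross terms disappear; recognizing this and verifying nilpotency of $L-R$ (a difference of commuting nilpotents) is the delicate part. By contrast, the bookkeeping that the solution lands in $M_1,N_1$ rather than merely in $M,N$ is routine given the bimodule inclusions already available.
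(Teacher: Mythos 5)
Your argument is correct, but it takes a genuinely different route from the paper's, and the difference matters. The paper's proof of the converse uses the naive decomposition $x=\left(\begin{smallmatrix} e & 0\\ 0 & f\end{smallmatrix}\right)+\left(\begin{smallmatrix} p & m\\ n & q\end{smallmatrix}\right)$, pushing all the off-diagonal data into the second summand and using zero pairing only to check that a matrix with nilpotent diagonal and arbitrary off-diagonal entries is nilpotent (its $k$-th power has zero diagonal, and any such matrix squares to zero). That is shorter, but it never addresses the commutation $EW=WE$, and indeed $\mathrm{diag}(e,f)$ commutes with $\left(\begin{smallmatrix} p & m\\ n & q\end{smallmatrix}\right)$ only when $em=mf$ and $fn=ne$, which need not hold; so the paper's proof as written establishes that $I$ is a nil clean ideal (its last line even says exactly that) but not the \emph{strongly} nil clean conclusion in the statement. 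Your proof repairs precisely this: by allowing the idempotent to have off-diagonal entries $x_0,y_0$, reducing $EW=WE$ to $Ex=xE$, and observing that idempotency confines $x_0$ to $eMf'\oplus e'Mf$ where the operator $x_0\mapsto ax_0-x_0b$ becomes $\pm\mathrm{id}$ plus a difference of commuting nilpotents (note the sign is $-\mathrm{id}$ on the $e'Mf$ summand, which does not affect invertibility), you produce a commuting decomposition, and the bimodule inclusions from the ideal-structure lemma keep $x_0\in M_1$ and $y_0\in N_1$. So your version costs a Peirce decomposition and an invertibility argument, but it actually proves the theorem as stated, whereas the paper's buys brevity at the price of proving only the non-strong version. (Both proofs, yours and the paper's, delegate the forward implication to the preceding theorem; also note the statement's ``$A_2$'' should read ``$B_1$''.)
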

\begin{proof}
 Let $A_1$ and $A_2$ be strongly nil clean ideals of $A$ and $B$ respectively. Let $x= \left(
                                                                 \begin{array}{cc}
                                                                   a & m \\
                                                                   n & b \\
                                                                 \end{array}
                                                               \right)\in I$,
 then there exist $k\in \mathbb{N}$, $e\in Idem(A_1)$, $p\in Nil(A_1)$, $f\in Idem(A_2)$ and $q\in Nil(A_2)$ such that $x= \left(
                                                                 \begin{array}{cc}
                                                                   e & 0 \\
                                                                   0 & f \\
                                                                 \end{array}
                                                               \right)+ \left(
                                                                 \begin{array}{cc}
                                                                   p & m \\
                                                                   n & q \\
                                                                 \end{array}
                                                               \right)$, $p^k=0$ and $q^k=0$. Now it is easy to see that $\left(
                                                                 \begin{array}{cc}
                                                                   p & m \\
                                                                   n & q \\
                                                                 \end{array}
                                                               \right)^k=\left(
                                                                 \begin{array}{cc}
                                                                   p^k & m_1 \\
                                                                   n_1 & q^k \\
                                                                 \end{array}
                                                               \right)=\left(
                                                                 \begin{array}{cc}
                                                                   0 & m_1 \\
                                                                   n_1 & 0 \\
                                                                 \end{array}
                                                               \right)$, where $m_1\in M$ and $n_1\in N$. So $\left(
                                                                 \begin{array}{cc}
                                                                   p & m \\
                                                                   n & q \\
                                                                 \end{array}
                                                               \right)^{2k}=\left(
                                                                 \begin{array}{cc}
                                                                   0 & 0 \\
                                                                   0 & 0 \\
                                                                 \end{array}
                                                               \right)$ and $\left(
                                                                 \begin{array}{cc}
                                                                   e & 0 \\
                                                                   0 & f \\
                                                                 \end{array}
                                                               \right)\in Idem(R)$. Hence $I$ is a nil clean ideal of $R$.
 \begin{Cor}
   Let $T$ be a $2\times2$ upper triangular matrix ring over $R$, then an ideal $\left(
                                                                 \begin{array}{cc}
                                                                   I & R \\
                                                                   0 & J \\
                                                                 \end{array}
                                                               \right)$ of $T$ is nil clean ideal \textit{if and only if} $I$ and $J$ are nil clean ideals of $R$.
 \end{Cor}
\end{proof}


\begin{thebibliography}{99}
\bibitem{Nilcmr}Breaz S., Călugăreanu G., Danchev P., and Micu T., Nil-clean matrix rings, \textit{Linear Algebra and its Applications}, $439$, no. $10$ $(2013)$: $3115$-$3119$.

\bibitem{chen2003clean}Chen, Huanyin and Chen, Miaosen, On clean ideals, \textit{International Journal of Mathematics and Mathematical Sciences} $2003(62)$:$3949$-$3956$, $2003$.
\bibitem{Diesl1}Diesl Alexander J., Classes of Strongly Clean Rings, \textit{Ph. D. Thesis, University of California, Berkeley,}, $(2016)$.
\bibitem{Diesl} Diesl Alexander J., Nil clean rings, \textit{Journal of Algebra} $383$ $(2013)$: $197$-$211$.
\bibitem{Tamer}Kosan T., Zhou W. and Zhou Y., Nil-clean and strongly nil-clean rings, \textit{Journal of Pure and Applied Algebra} $220(2)$ $(2016):$ $633$-$646$.
\bibitem{nicholson1977lifting}Nicholson, W. K., Lifting idempotents and exchange rings, \textit{Transactions of the American Mathematical Society}, $229$:$269$-$278$, $1977$.
\bibitem{Tang} Tang G., Chunna Li, and Zhou Y., Study of Morita contexts, \textit{Communications in Algebra},  $42.4$, $2014$: $1668-1681$.



\end{thebibliography}
\end{document}